\documentclass[artcle, 11pt, reqno]{amsart}

\usepackage{lipsum}
\usepackage{amsfonts}
\usepackage{graphicx}
\usepackage{epstopdf}
\usepackage{algorithmic}
\usepackage[utf8]{inputenc}
\usepackage{mathtools}
\usepackage{amssymb}
\usepackage{hyperref}
\usepackage{textcomp}
\usepackage[english]{babel}
\usepackage{verbatim}
\usepackage{todonotes}

\usepackage{bbm}
\usepackage{float}
\usepackage{mathrsfs}
\numberwithin{equation}{section} 

\newcommand{\R}{\mathbb{R}}

\newcommand{\N}{\mathbb{N}}
\newcommand{\Hd}{\mathcal{H}}
\newcommand{\pom}{{\partial \Omega}}
\newcommand{\ind}{\mathbbm{1}}
\newcommand{\C}{\mathcal{C}}
\newcommand{\contvf}{C^1_c(\R^n; \R^n)}
\newcommand{\ninfty}{n \rightarrow \infty}
\newcommand{\dd}{\: d}
\newcommand{\Enumerator}{\int_{\partial \C} \ind_E(x) \dd \Hd^{n-1}}
\newcommand{\divv}{\textnormal{div} \;}
\DeclareMathOperator{\Tr}{Tr}

\ifpdf
\DeclareGraphicsExtensions{.eps,.pdf,.png,.jpg}
\else
\DeclareGraphicsExtensions{.eps}
\fi

\theoremstyle{plain}
\begingroup
\newtheorem{theorem}{Theorem}
\newtheorem{lemma}[theorem]{Lemma}
\newtheorem{proposition}[theorem]{Proposition}
\newtheorem{corollary}[theorem]{Corollary}
\endgroup
\theoremstyle{definition}
\begingroup
\newtheorem{definition}[theorem]{Definition}
\newtheorem{remark}[theorem]{Remark}

\endgroup
\theoremstyle{remark}

\makeatletter
\def\namedlabel#1#2{\begingroup
	#2%
	\def\@currentlabel{#2}%
	\phantomsection\label{#1}\endgroup
}
\makeatother

\title{Optimal constant for the trace inequality in {$BV$} for domains with corners}

\author{Riccardo Cristoferi}
\author{Devin van der Gulik}

\usepackage{amsopn}

\ifpdf
\hypersetup{
	pdftitle={Optimal constant for the trace inequality in {$BV$} for domains with corners},
	pdfauthor={}
}
\fi

\begin{document}
	
	\maketitle
	
	\begin{abstract}
		We determine the explicit value of the optimal constant in the trace inequality for functions of bounded variations in the case the domain has a particular class of singularities.
	\end{abstract}
	
	\section{Introduction}

	Derivatives and integrals are inherently connected through the fundamental theorem of calculus, with its multi-dimensional equivalent being the Gauss-Green Theorem, also known as the Divergence Theorem. This fundamental result states that, given a set $\Omega \subset \R^n$ with piecewise smooth boundary, and $T \in C^1(\R^n;\R^n)$, it holds that 
	\begin{equation} \label{eq:GGtheorem}
		\int_E \textnormal{div } T\dd x = \int_{\partial E} T \cdot \nu_E \dd\Hd^{n-1},
	\end{equation}
	where with $\nu_\Omega(x)$ we denote the outward normal to $\Omega$ at $x \in \partial \Omega$, and with $\Hd^{n-1}$ the surface measure on $\partial\Omega$ (or, more precisely, the $(n-1)$-dimensional Hausdorff measure, see Definition \ref{DefHausdM}).
	In particular, if $u\in C^1(\overline{\Omega})$, then the above results yields that
	\begin{equation}\label{eq:GG_reg}
		\int_\Omega \partial_i u(x)\dd x = \int_{\partial\Omega} u(x)\nu_i(x)\dd\Hd^{n-1},
	\end{equation}
	for all $i=1,\dots,n$, where $\nu_i(x)$ denotes the $i^{th}$ component of the vector $\nu(x)$.
	Note that this theorem requires strong regularity on $u$ in order to give meaning of $u(x)$ for $x\in\partial\Omega$. However, many of the subjects in applied analysis work with more general function spaces than $C^1$, that use a weaker notion of derivatives, such as Sobolev spaces, or functions of bounded variation (see \cite{leoni2017first}).
	When one tries to generalize \eqref{eq:GG_reg} to functions belonging to these spaces, one of the problems to tackle is how to give a meaning to $u(x)$ for $x\in\partial\Omega$.
	Fortunately, it is possible to solve such a problem by using the notion of \emph{trace}. We now focus on the case of functions of bounded variations in $\Omega$, that we denote by $BV(\Omega)$. If $\Omega$ has a Lipschitz boundary (which will be assumed for the rest of this section), there exists a unique bounded linear operator
	\[
	\Tr: BV(\Omega) \rightarrow L^1(\pom)
	\]
	such that, for $u \in BV(\Omega)$, we have  
	\[
	\int_\Omega \partial_i u(x)\dd x = \int_{\partial\Omega} \Tr(u)(x)\nu_i(x)\dd\Hd^{n-1},
	\]
	for all $i=1,\dots,n$. Moreover, if $u\in BV(\Omega)\cap C^0(\overline{\Omega})$, then $\Tr(u)(x)=u(x)$ for all $x\in\partial\Omega$.
	
	Boundedness of the operator $\Tr$ means that there exists a constant $C>0$ that depends only on $\Omega$ and on the dimension $n$ such that
	\[
	\int_{\partial\Omega} |\Tr(u)(x)|\dd\Hd^{n-1} \leq C \|u\|_{BV(\Omega)}
	= C\left[ \int_\Omega |u(x)|\dd x + |Du|(\Omega) \right],
	\]
	for all $u \in BV(\Omega)$. Here, the term $|Du|(\Omega)$ denotes the total variation of the measure $Du$ in $\Omega$. For the reader not used to this notion, it can be seen as a generalization of $\|\nabla u\|_{L^1(\Omega;\R^n)}$.
	Nevertheless, this estimate is, in general, not enough when dealing with problems in the Calculus of Variations where both boundary and bulk energies are involved (see, for instance \cite{Modica, cristoferi2020relaxation}). As an example, consider the functional $\mathcal{F}: BV(\Omega)\to\R$ defined as
	\[
	\mathcal{F}(u) := |Du|(\Omega) + \alpha \int_{\partial\Omega} |\Tr(u)(x)|\dd\Hd^{n-1} ,
	\]
	for some $\alpha\in\R$. We want to show that the minimization problem
	\[
	\min_{u\in BV(\Omega)} \mathcal{F}(u)
	\]
	has a solution.
	Note that if $\alpha\geq0$, then the problem is trivial, since it is solved by the function $u\equiv0$. On the other hand, when $\alpha<0$, we need to ensure that
	\[
	\int_{\partial\Omega} |\Tr(u)(x)|\dd\Hd^{n-1} \leq -\frac{1}{\alpha}|Du|(\Omega),
	\]
	otherwise the minimization problem doesn't admit a solution, since the infimum of $\mathcal{F}$ over $BV(\Omega)$ would be $-\infty$. What is the lower possible value of $\alpha$ for which the problem admits a solution? The answer to this question is equivalent to find the lower possible constant $C>0$ such that there exists $\lambda>0$ such that
	\[
	\int_{\partial\Omega} |\Tr(u)(x)|\dd\Hd^{n-1}
	\leq C\int_\Omega |u(x)|\dd x + \lambda |Du|(\Omega),
	\]
	for all $u\in BV(\Omega)$.
	The validity of such an estimate has been established by Anzellotti and Giaquinta in \cite{anzellotti1978funzioni}.
	In particular, they proved that there exists a constant $Q_{\partial\Omega}>0$ with the following property: for every $r > 0$ there exists $C(r, \Omega) > 0$ such that
	\begin{equation} \label{eq:trineq}
		\int_{\partial^\ast \Omega} |\Tr(u)(x)|\dd\Hd^{n-1} \leq (Q_\pom + r)|D u|(\Omega) + C(r, \Omega)\int_\Omega |u(x)|\dd x,
	\end{equation}
	for all $u \in BV(\Omega, \R^M)$.
	They also provided a characterization of such a constant $Q_\pom$, and proved that it is the infimum among all the constants $Q$ for which there exists $C$ with the property that a bound of the form \eqref{eq:trineq} holds for every $u \in C^1_c(\Omega)$.
	It holds that
	\[
	Q_\pom \coloneqq \sup \{q_\pom(x) : x \in \pom\},
	\]
	where
	\begin{equation} \label{Def:qpom}
		q_\pom(x) \coloneqq \lim_{\rho \to 0^+} \sup\Bigl\{ \frac{1}{P(E;\Omega)} \int_\pom \Tr(\ind_E)(x) \dd\Hd^{n-1} : E \subset B(x,\rho), |E| >0, P(E;\Omega) < \infty \Bigr\},
	\end{equation}
	where $P(E;\Omega)$ denotes the perimeter of $E$ in $\Omega$ (see Definition \ref{def:perimeter}).
	Thus, to answer our questions, we now have to solve an asymptotic geometric variational problem. In the case that $\pom$ is locally $C^1$ around a point $x \in \pom$, Giusti proved in \cite{giusti1976boundary} that $q_\pom(x) = 1$ (see Theorem \ref{thm:Giusti}).
	Moreover, it holds that $q_\pom(x) \geq 1$. As visualized in Figure \ref{fig:qpom}, $q_\pom$ can attain higher values than 1 whenever $\pom$ has corners that are outward-pointing with respect to $\Omega$. Very little is know for the explicit value of $q_{\partial\Omega}$ for a general set $\Omega\subset\R^n$.
	
	\begin{figure}[h] \label{fig:qpom}
		\centering  \includegraphics[scale=0.5]  {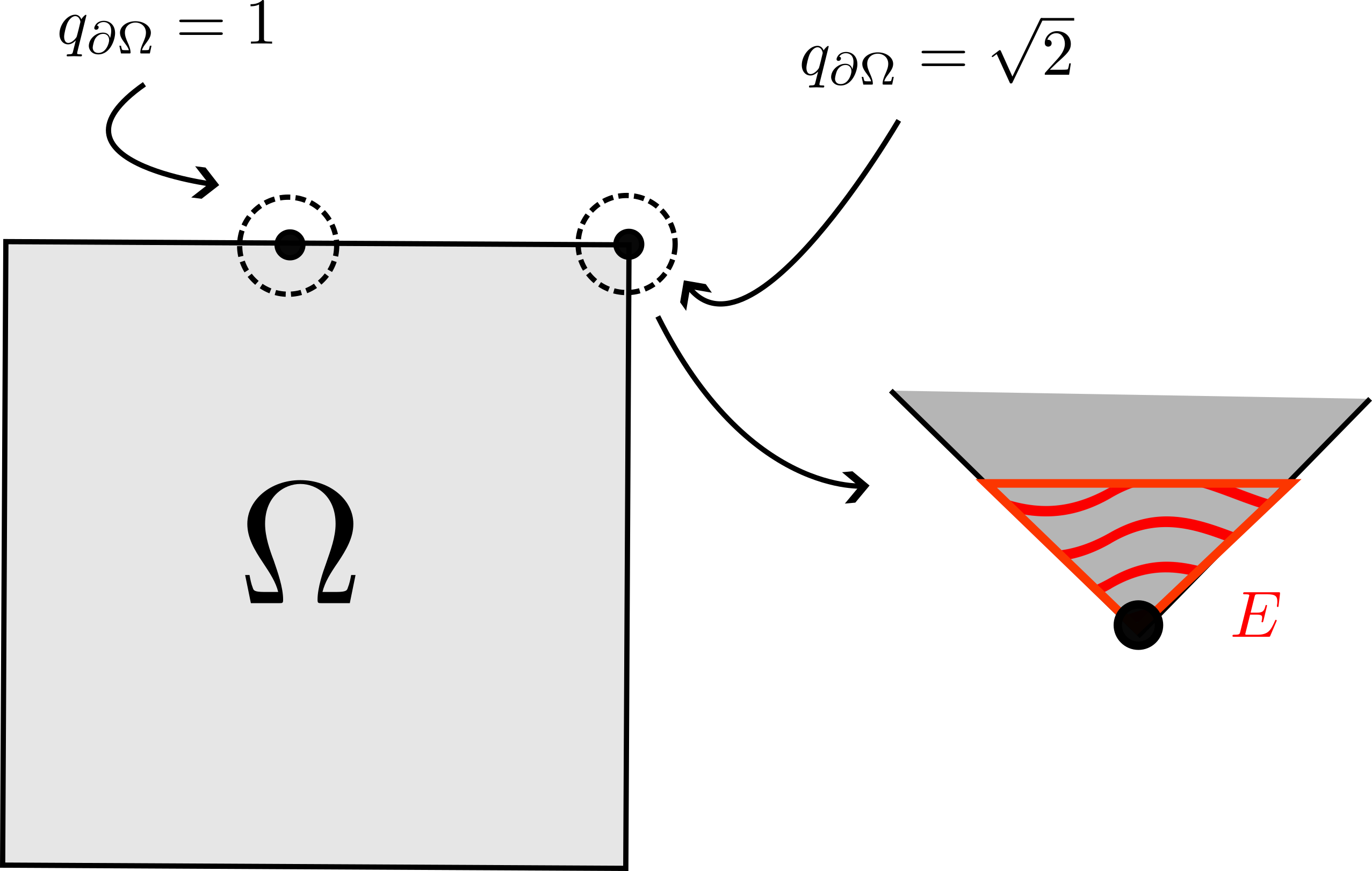}
		\caption{Example of a domain where $q_\pom$ varies between 1 and $\sqrt{2}$ on $\pom$, so that $Q_\pom = \sqrt{2}$.}
		\label{fig:enter-label}
	\end{figure}
	
	The aim of this paper is to consider the general case $n\geq 2$, and obtain the explicit value of \eqref{eq:trineq} for points in $\partial\Omega$ with a specific type of singularity, that generalised that of angles in dimension $n=2$.

	\subsection{Main result}
	
	What is left open in the theory, is to understand what happens when at a point $x\in\partial\Omega$, the boundary is not of class $C^1$. Here, we focus on a particular type of singularity that generalises that of an angle in dimension $n=2$. Without loss of generality, we take the point $x$ to be the origin.
	
	\begin{definition} \label{def:cone}
		Let $\C\subseteq \R^n$ be an open subset. We say that $\C$ is a \textbf{cone} if there exists a continuous function $f: \mathbb{S}^{n-2} \rightarrow (0, \infty]$ such that
		\[
		\C = \biggl\{ x \in \R^n : \Big| \frac{x'}{x_n} \Big|<  f \left( \frac{x'}{x_n} \right) \biggr\},
		\]
		where, for each $x\in\R^N$, we write \( x = (x', x_n)\), with $x'\in\R^{n-1}$, and $x_n\in\R$.
		We call \textbf{basis} of the cone, the set $G(f) \subset \R^{n-1}$ defined $\{x'\in\R^{n-1} : (x',1)\in\C\}$.
	\end{definition}
	Thanks to the homogeneity of the cone, we get that
	\[
	q_\pom(0) = \sup \Bigl\{ \frac{1}{P(E;\C)} \int_{\partial \C} \Tr(\ind_E)(x) \dd \Hd^{n-1} : |E| > 0, E \subseteq \C \text{ of finite perimeter} \Bigr\}.
	\]
	Before stating the main result of this paper, we give some heuristics. Consider, in dimension $n=2$, an angle like in Figure \ref{fig:qpom} on the right. The solution to the minimization problem defining $q_\pom(x)$ in this case is given by a segment which is orthogonal to the bisectrix of the angle. We expect the same to hold also in higher dimension. The main result of this paper confirms this heuristics in the case of cones having an inner tangent sphere.
	
	\begin{theorem}
		Let $\C$ be a cone with a base $G(f) \subset \R^{n-1}$. Suppose that there exists a sphere 
		\[
		S_R(\bar{x}) \coloneqq \{ y \in \R^{n-1} : \|\bar{x}-y\| = R \} \subset \R^{n-1},
		\]
		such that any hyperplane $T_y \subset \R^{n-2}$ tangent to $G(f)$ at $y \in G(f)$ is also tangent to $S_R(x)$ at the same point. Then, we have 
		\[
		q_\pom(0)=\frac{\sqrt{1+R^2}}{R}.
		\]
		Moreover, a set $E\subset\C$ is a solution to the maximization problem defining $q_\pom(0)$ if and only if it is of the form $E=\C\cap \{x\in\R^n : x\cdot \bar{x} \leq t\}$ for some $t>0$.
	\end{theorem}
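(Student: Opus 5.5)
The plan is a \emph{calibration} argument with a constant vector field, combined with the Gauss--Green formula for sets of finite perimeter. The geometric hypothesis says that every supporting hyperplane of the base $G(f)$ is tangent to the sphere $S_R(\bar x)$. I will translate this into a statement about the outer normal $\nu_\C$ on the lateral boundary $\partial\C$. Take a boundary point on the ray through $(y,1)$, with $y\in\partial G(f)$, and let $n(y)\in\mathbb{S}^{n-2}$ be the outer normal of $G(f)$ at $y$. Tangency to the sphere gives $n(y)\cdot(y-\bar x)=R$. The outer normal to the cone is then proportional to $(n(y),-n(y)\cdot y)$.

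I will first normalise coordinates so that the axis of the inscribed circular cone (the cone over $S_R(\bar x)$) points along the direction $e$ of the bisectrix; in the statement this is the direction $\bar x$ defining the half-spaces $\{x\cdot\bar x\le t\}$. In the model case where the centre sits on the $x_n$-axis, one gets $\nu_\C=(n(y),-R)/\sqrt{1+R^2}$. Hence, with $e=-e_n$ (pointing into the cone),
\[
\nu_\C\cdot e \;=\; -\frac{R}{\sqrt{1+R^2}}\qquad\text{for }\Hd^{n-1}\text{-a.e. point of }\partial\C .
\]
The key structural fact is that this quantity is \emph{constant} on $\partial\C$; this is exactly what the tangent-sphere hypothesis buys. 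I expect the reduction of a general $\bar x$ to this model situation (choosing the correct direction $e$, which is the axis through the apex and the centre of the inscribed ball) to be a short but slightly fiddly computation. I will carry it out first and simply record the resulting identity $\nu_\C\cdot e\equiv -\cos\theta$, where $\tan\theta=1/R$ in the model normalisation, so that $\cos\theta=R/\sqrt{1+R^2}$.

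\textbf{Upper bound.} Let $E\subset\C$ have finite perimeter and finite measure; by homogeneity and truncation I may first assume $E$ is bounded. Apply the Gauss--Green formula to $E$ in $\R^n$ with the constant field $e$. Since $\operatorname{div} e=0$,
\[
0=\int_{\partial^*E} e\cdot\nu_E\dd\Hd^{n-1}
=\int_{\partial\C}\Tr(\ind_E)\,e\cdot\nu_\C\dd\Hd^{n-1}+\int_{\partial^*E\cap\C} e\cdot\nu_E\dd\Hd^{n-1}.
\]
Here I use that $\partial^*E\cap\partial\C$ carries the normal $\nu_\C$ with density $\Tr(\ind_E)$. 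By the constancy above this yields
\[
\frac{R}{\sqrt{1+R^2}}\int_{\partial\C}\Tr(\ind_E)\dd\Hd^{n-1}
=\int_{\partial^*E\cap\C} e\cdot\nu_E\dd\Hd^{n-1}\le P(E;\C).
\]
This is the bound $q_\pom(0)\le\sqrt{1+R^2}/R$. For general $E$ (unbounded, with $|E|<\infty$) I will intersect with large balls $B_\rho$ and choose radii along which $\Hd^{n-1}(\partial B_\rho\cap E)\to0$. Such radii exist by the coarea formula since $|E|<\infty$, and the error terms then vanish.

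\textbf{Lower bound and equality.} For $E_t=\C\cap\{x\cdot e\le t\}$, the interior boundary is the flat cross-section with $\nu_E=e$. So equality holds in the last inequality and the ratio equals $\sqrt{1+R^2}/R$; alternatively this is a direct computation using that the lateral area over a base with inscribed radius $R$ relates to the base area through the same cosine. Conversely, equality forces $\nu_E=e$ for $\Hd^{n-1}$-a.e.\ point of $\partial^*E\cap\C$. Then $D\ind_E$ restricted to $\C$ is a nonnegative multiple of $e$, so $\ind_E$ is monotone in direction $e$ inside the (convex in direction $e$) cone. Combined with $|E|<\infty$, this means $E$ is a sublevel set $\{x\cdot e\le t\}\cap\C$, up to null sets.

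I expect the main obstacle to be this rigidity step together with the justification of Gauss--Green with traces for unbounded $E$ in an unbounded Lipschitz-away-from-the-apex domain. The monotonicity argument needs care on slices parallel to $e$, because the cone's cross-sections in direction $e$ are intervals whose endpoints lie on $\partial\C$. My first attempt would be to slice $E$ by lines parallel to $e$ and use the one-dimensional characterisation of $BV$ functions with nonnegative derivative on each slice.
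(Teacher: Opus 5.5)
Your argument is the same calibration as the paper's. Both use the constant field $e_n$, show that $e_n\cdot\nu_\C\equiv -R/\sqrt{1+R^2}$ on $\partial\C$, split Gauss--Green into the trace term on $\partial\C$ and an interior term bounded by $P(E;\C)$, and get rigidity from $\nu_E=e_n$ on $\partial^*E\cap\C$. When the sphere is centred at $\bar x=0$ your proof is correct, up to one sign slip: with your formula $\nu_\C=(n(y),-R)/\sqrt{1+R^2}$, the field that points into the cone and gives $\nu_\C\cdot e=-R/\sqrt{1+R^2}$ is $e=e_n$, not $-e_n$.

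In two places your proof is more careful than the paper's:
\begin{itemize}
\item The paper applies Gauss--Green with a non-compactly supported field and never truncates; your ball truncation handles this.
\item The paper invokes the whole-space half-space characterization, although $\nu_E=\nu_\C\neq e_n$ on $\partial^*E\cap\partial\C$. A local argument inside $\C$, like your slicing, is needed.
\end{itemize}
In the rigidity step, monotonicity along lines parallel to $e$ plus $|E|<\infty$ does not by itself force a single height $t$. You must also use that the components of $D\ind_E\llcorner\C$ orthogonal to $e$ vanish, and that the slices $\C\cap\{x_n=s\}=sG(f)$ are connected (the hypothesis makes $G(f)$ star-shaped about $0$).

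The genuine gap is the step you postpone as a ``short but slightly fiddly computation'': reducing a general centre $\bar x$ to the model case. It cannot be done. The paper's ``without loss of generality $\bar x=0$, as we can rotate the cone'' hides the same error.

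In your notation, the hypothesis gives $n(y)\cdot y=R+n(y)\cdot\bar x$, hence $\nu_\C=(n,-R-n\cdot\bar x)/\sqrt{1+(R+n\cdot\bar x)^2}$. This does not make a constant angle with $e_n$, and rotating does not help. For $n\geq 3$ and $G(f)=B_R(\bar x)$ with $\bar x\neq 0$, $\C$ is an oblique circular cone. Its quadratic form has the eigenvalue $1$ with multiplicity only $n-2$, so it is not a right circular cone about any axis, and no constant unit field $e$ makes $e\cdot\nu_\C$ constant.

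Worse, the asserted value itself is false. Take $n=2$ and $G(f)=(0,2)$, so $\bar x=1$ and $R=1$; the cone is an angle of opening $\arctan 2$. The triangle cut orthogonally to the bisector gives
\[
\beta=1/\sin(\tfrac12\arctan 2)\approx 1.90>\sqrt 2=\sqrt{1+R^2}/R .
\]
Moreover, the bisector is not the direction $(\bar x,1)$.

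So both the statement and your proof hold only when the inscribed sphere is centred at $\bar x=0$. Equivalently, the base must be taken in the hyperplane orthogonal to the axis $e$, with $R$ measured there. In that case the maximizers are $\C\cap\{x_n<t\}$, $t>0$.
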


	\section{Preliminaries} \label{sec:prelim}
	In this section we introduce the necessary mathematical language necessary to define the perimeter of a set and mention its relevant properties, by using some tools from geometric measure theory.
	We start by introducing the notion of trace of a set on a regular boundary.
	
	\begin{definition} \label{def:trace}
		Let $\Omega\subset\R^n$ be an open set with Lipschitz boundary.
		For a set $E \subset \R^n$ and $x \in \R^n$ the \textbf{Trace} of $E$ on $\partial \Omega$ is defined as
		\[
		\Tr(\ind_E)(x) =  \lim_{\rho \rightarrow 0^+} \frac{|E \cap B(x, \rho) \cap \Omega|}{|B(x, \rho) \cap \Omega|},
		\]
		for all $x\in\partial\Omega$.
	\end{definition}

	\subsection{Hausdorff measure}
	As we will be dealing with boundaries of sets, we need a way to determine whether it is well-defined and of finite area. To this end, we first introduce the Hausdorff measure:
	
	\begin{definition}
		Let $n \geq k>0$. Then for $\delta > 0$, the \emph{outer measure $\mathcal{H}^k_{\delta}$ of step $\delta$} of a set $E \subset \R^n$ is defined as 
		\[
		\mathcal{H}^k_{\delta}(E) \coloneqq \inf_{\mathcal{F}} \sum_{F \in \mathcal{F}} \omega_k \left( \frac{\textnormal{diam}(F)}{2} \right) ^k.
		\]
		Here, $\mathcal{F}$ is any covering of $E$ such that for any $F \in \mathcal{F}$ we have $ \textnormal{diam}(F)<\delta$, where diam$(F)$ denotes the diameter of the set.
	\end{definition}
	Note that $\mathcal{H}^k_\delta$ is not a measure, but it does define an outer measure on $\R^n$. However, taking $\delta$ to 0 we do find a well-defined measure.
	
	\begin{definition} \label{DefHausdM}
		Let $n \geq k>0$. Then, the \textbf{$k$-dimensional Hausdorff measure} of a set $E \subset \R^n$ is defined as
		\[
		\mathcal{H}^k(E) \coloneqq \sup_{\delta \in (0, \infty]} \mathcal{H}^k_\delta(E) = \lim_{\delta \to 0^+} \mathcal{H}^k_\delta(E).
		\]
	\end{definition}
	
	
	\subsection{Sets of finite perimeter}
	In this section we will now show how we can use Riesz´s theorem to construct a vector-valued surface measure, giving meaning to the notion of the perimeter of sets, and describe how it is related to the Hausdorff measure.
	
	\subsubsection{The Gauss-Green measure} \label{subsec:GGmeasure}
	We have a look at equation \eqref{eq:GGtheorem}. If $E$ has $C^1$-boundary, and there exists $T\in C^1(\R^n;\R^n)$ with $|T|=1$, such that $T(x) = \nu_E(x)$ for all $x \in \partial E$, then
	\begin{equation} \label{eq:T_C1bndry}
		\int_E \textnormal{div } T\dd x
		= \int_{\partial E} T \cdot \nu_E\dd\Hd^{n-1}
		= \int_{\partial E} 1 \: \dd\Hd^{n-1}
		= \Hd^{n-1} (\partial E).
	\end{equation}
	Note that the  right-hand side is also the greatest value attained by the functional
	\[
	T\mapsto \int_E \textnormal{div } T
	\]
	over vector fields $T\in C^1(\R^n;\R^n)$ with $\sup_{\R^n} |T| \leq 1$.
	This motivates the following definition.
	
	\begin{definition} \label{def:perimeter}
		Let $A\subset\R^n$ be an open set. A set $E \subseteq \R^n$ is a \textbf{set of locally finite perimeter in $A$} if for all $K \subset A$ compact we have 
		\begin{equation} \label{eq:LocFin}
			\sup\left\{ \int_E \textnormal{div } T \: : \: T \in C^1_c(A;\R^n), \: \textnormal{spt }T \subseteq K, \: |T| \leq 1 \right\} < \infty.
		\end{equation}
		Here, $|T| \leq 1$ is understood as $\sup_{A} |T(x)| \leq 1$.
		In particular, we say that $E$ is a \textbf{set of finite perimeter in $A$} when the above bound is independent of $K$.
	\end{definition}
	If we now look at equation \ref{eq:T_C1bndry} again, one may wonder whether it is possible to find some vector valued $\mu_E$ for sets of locally finite perimeter $E$ so that integrating $T$ with respect to $\mu_E$ leads to a generalized divergence theorem.
	Indeed, noting that the integral term in Definition \ref{def:perimeter} is a linear functional over $C^1_c(\R^n;\R^n)$, we can apply Riesz's representation theorem to represent it as a measure.
	
	\begin{proposition}
		Let $E \subseteq A$ be a Lebesgue measurable set. Then, $E$ is a set of locally finite perimeter in $A$ if and only if there exists an $\R^n$-valued Radon measure $\mu_E$ on $A$, called the \textnormal{\textbf{Gauss-Green measure}}, such that
		\begin{equation} \label{eq:GGmeasure}
			\int_E \textnormal{div }T = \int_{A} T \cdot \textnormal{d}\mu_E,
		\end{equation}
		for all $T \in C^1_c(A;\R^n)$.
	\end{proposition}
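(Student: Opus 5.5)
The plan is to prove the equivalence directly from Definition \ref{def:perimeter}, using the Riesz representation theorem for positive linear functionals on $C_c(A)$ (and its vector-valued version). The reverse implication is immediate. The forward implication follows by extending the functional
\[
L(T) := \int_E \textnormal{div } T \dd x
\]
by density from $C^1_c$ to $C_c$ and then representing it as a measure.

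\textbf{($\Leftarrow$).} Suppose an $\R^n$-valued Radon measure $\mu_E$ satisfies \eqref{eq:GGmeasure}. Fix $K\subset A$ compact and let $T\in C^1_c(A;\R^n)$ with $\textnormal{spt}\,T\subseteq K$ and $|T|\le 1$. Then
\[
\int_E \textnormal{div } T = \int_A T\cdot \textnormal{d}\mu_E \le \int_K |T|\dd|\mu_E| \le |\mu_E|(K).
\]
The total variation $|\mu_E|$ is Radon, so $|\mu_E|(K)<\infty$. This gives \eqref{eq:LocFin}.

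\textbf{($\Rightarrow$).} Assume \eqref{eq:LocFin} and call the supremum there $c(K)$.

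\emph{Step 1: extend $L$ to continuous fields.} Replacing $T$ by $-T$ shows $|L(T)|\le c(K)\sup|T|$ for every $T\in C^1_c(A;\R^n)$ supported in $K$. Fix $K$ and a slightly larger compact $K'\subset A$ containing a neighbourhood of $K$. Every $T\in C_c(A;\R^n)$ with support in $K$ can be mollified: $T_\varepsilon=T*\rho_\varepsilon$ is supported in $K'$ for small $\varepsilon$ and satisfies $\sup|T_\varepsilon|\le\sup|T|$. Moreover $T_\varepsilon\to T$ uniformly, so $(L(T_\varepsilon))$ is Cauchy, since $|L(T_\varepsilon)-L(T_\delta)|\le c(K')\sup|T_\varepsilon-T_\delta|$. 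We therefore define $L(T):=\lim_{\varepsilon\to0}L(T_\varepsilon)$. The same estimate shows the limit is independent of the approximating sequence. Consequently $L$ extends uniquely to a linear functional on $C_c(A;\R^n)$ with $|L(T)|\le c(K')\sup|T|$ whenever $\textnormal{spt}\,T\subseteq K$.

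\emph{Step 2: build the total variation.} For nonnegative $\varphi\in C_c(A)$ set
\[
\lambda(\varphi):=\sup\{L(T): T\in C_c(A;\R^n),\ |T|\le\varphi\}.
\]
One checks in the standard way that $\lambda$ is additive and positively homogeneous on nonnegative functions. Superadditivity is clear. For subadditivity, given $|T|\le\varphi_1+\varphi_2$, split $T=T_1+T_2$ with
\[
T_i=\frac{\varphi_i}{\varphi_1+\varphi_2}\,T
\]
(and $T_i=0$ where $\varphi_1+\varphi_2=0$). Each $T_i$ is continuous because $|T|\le\varphi_1+\varphi_2$, and $|T_i|\le\varphi_i$. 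Since $\lambda(\varphi)\le c(K')\sup\varphi$ is finite, $\lambda$ extends to a positive linear functional on $C_c(A)$. The Riesz representation theorem then yields a Radon measure $\mu$ with $\lambda(\varphi)=\int\varphi\dd\mu$.

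\emph{Step 3: find the density.} By construction $|L(T)|\le\int|T|\dd\mu$. For each fixed unit vector $e$, the map $\varphi\mapsto L(\varphi e)$ is therefore a signed functional dominated by $\mu$. A Radon–Nikodym/duality argument in $L^1(\mu)$ gives a $\mu$-measurable $\sigma$ with $|\sigma|\le1$ and $L(T)=\int T\cdot\sigma\dd\mu$. Setting $\mu_E:=\sigma\mu$ gives an $\R^n$-valued Radon measure satisfying \eqref{eq:GGmeasure}. Alternatively, Steps 2–3 can be replaced by citing the vector-valued Riesz theorem directly, since $L$ is locally bounded on $C_c(A;\R^n)$.

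The main obstacle is Step 2: verifying that $\lambda$ is genuinely additive, with the partition-of-$T$ argument above, and that it is locally finite. The density extension in Step 1 and the representation in Step 3 are routine.
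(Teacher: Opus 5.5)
Your proof is correct and takes essentially the same approach as the paper. The paper gives no formal proof: it only remarks that $T\mapsto\int_E \textnormal{div}\,T$ is a linear functional to which Riesz's representation theorem applies. You supply the details it omits, namely the trivial converse, the extension from $C^1_c(A;\R^n)$ to $C_c(A;\R^n)$ using the local bound $|L(T)|\le c(K')\sup|T|$, and a vector-valued Riesz argument through the variation functional $\lambda$ (which, as you note, could simply be cited).
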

	From this vector-valued measure $\mu_E$, we can now construct a real-valued non-negative measure by taking its total variation $|\mu_E|$.
	For a set of locally finite perimeter $E$ we can now define its perimeter as follows.
	
	\begin{definition} \label{def:localperimeter}
		Let $E \subset \R^n$ be of locally finite perimeter in $A$. We define the $\textbf{relative perimeter}$ of $E$ in $F \subset A$ as 
		\[
		P(E;F) \coloneqq |\mu_E|(F).
		\]
		In case $A=F=\R^n$, we simply write $P(E)$.
	\end{definition} 
	We now investigate the relation between the Gauss-Green measure and the Hausdorff measure.
	Note that if $E$ has $C^1$-boundary, then
	\[
	\int_{\partial E} T \cdot \nu_E =
	\int_E \textnormal{div }T = \int_{\R^n} T \cdot \textnormal{d}\mu_E,
	\]
	for all $T \in C^1_c(\R^n;\R^n)$. This suggests a relation between $\mu_E$ and the point-wise defined unit normal $\nu_E$ to $\partial E$. This motivates the following definition.
	
	\begin{definition} \label{def:reducedbdry}
		Let $E \subset \R^n$ be a set of locally finite perimeter, then we define its \\ $\textbf{reduced boundary}$ to be 
		\[
		\partial^\ast E \coloneqq \Bigl\{x \in \textnormal{spt } \mu_E \: : \: 
		\lim_{r \rightarrow0^+} \frac{\mu_E(B(x,r))}{|\mu_E|(B(x,r))} \textnormal{ exists and lays inside } \mathbb{S}^{n-1} \Bigr\}.
		\]
	\end{definition}
	We define the $\textbf{measure-theoretic outer unit normal}$ $\nu_E$ to $E$ to be 
	\[
	\nu_E(x) = \lim_{r \rightarrow0^+} \frac{\mu_E(B(x,r))}{|\mu_E|(B(x,r))}. 
	\]
	
	\begin{remark}
		As both $\mu_E$ and $|\mu_E|$ are Radon measures (see [Chapter 2]\cite{maggi2012sets}) with the same support, applying the Lebesgue-Besicovitch differentiation theorem \cite[Theorem 5.8]{maggi2012sets} it follows that $\nu_E$ is defined $|\mu_E|$-a.e. and also
		\[
		\mu_E = \nu_E |\mu_E|\llcorner \partial^\ast E.
		\] 
		Consequently, we have
		\begin{equation} \label{eq:GeneralizedDivThm}
			\int_E \divv T = \int_{\partial^\ast E} T \, \nu_E \, \dd|\mu_E|,
		\end{equation}
		for all $T \in C^1_c(\R^n;\R^n)$.
	\end{remark}
	Furthermore, we can relate the Gauss-Green measure to the Hausdorff measure with the following theorem. The proof can be found in \cite[Theorem 15.9]{maggi2012sets}.
	
	\begin{theorem}[De Giorgi's structure theorem] \label{thm:Giorgo}
		If a set $E$ of locally finite perimeter in $\R^n$, then the Gauss-Green measure $\mu_E$ of $E$ satisfies
		\[
		\mu_E = \nu_E \Hd^{n-1} \llcorner \partial^\ast E, \quad |\mu_E| = \Hd^{n-1} \llcorner \partial^\ast E.
		\]
		Moreover, the generalized Gauss-Green formula holds true:
		\[
		\int_E \divv T \coloneqq \int_{\partial^\ast E} T \, \nu_E \, \dd \Hd^{n-1},
		\]
		for all $T \in C^1_c(\R^n;\R^n)$.
	\end{theorem}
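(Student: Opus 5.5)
The plan is to identify $|\mu_E|$ with $\Hd^{n-1}\llcorner\partial^\ast E$ by computing the $(n-1)$-dimensional density of $|\mu_E|$ at every point of $\partial^\ast E$. Once we know
\[
\lim_{r\to0^+}\frac{|\mu_E|(B(x,r))}{\omega_{n-1}r^{n-1}}=1 \qquad\text{for every } x\in\partial^\ast E,
\]
the comparison theorems between Radon measures and Hausdorff measures do the rest. If $\mu$ is Radon and its upper density is $\le t$ on a set $B$, then $\mu\llcorner B\le t\,\Hd^{n-1}\llcorner B$. If the lower density is $\ge t$ on $B$, the reverse inequality holds. Combined with the fact, from the Remark above, that $|\mu_E|$ is concentrated on $\partial^\ast E$, this gives $|\mu_E|=\Hd^{n-1}\llcorner\partial^\ast E$. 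The identity $\mu_E=\nu_E|\mu_E|\llcorner\partial^\ast E$ from the Remark then yields $\mu_E=\nu_E\Hd^{n-1}\llcorner\partial^\ast E$. The generalized Gauss--Green formula follows immediately from \eqref{eq:GeneralizedDivThm}.

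To obtain the density, I would proceed in three steps.

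\emph{Step 1: density estimates.} Fix $x\in\partial^\ast E$. Comparing $E$ with $E\setminus B(x,r)$ and using the relative isoperimetric inequality, I would show
\[
c\,r^{n-1}\le|\mu_E|(B(x,r))\le C\,r^{n-1}
\]
for small $r$, together with volume density bounds $\min\{|E\cap B(x,r)|,|B(x,r)\setminus E|\}\ge c\,r^n$. For the upper bound, I would apply the divergence theorem on $E\cap B(x,r)$ for a.e.\ $r$ and test it against the constant field $\nu_E(x)$. Since $\mu_E(B(x,r))\cdot\nu_E(x)\approx|\mu_E|(B(x,r))$ by the definition of the reduced boundary, this bounds $|\mu_E|(B(x,r))$ by $\Hd^{n-1}(\partial B(x,r))\approx n\omega_n r^{n-1}$.

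\emph{Step 2: blow-up.} Set $E_{x,r}=(E-x)/r$. The uniform perimeter bounds from Step 1 and BV compactness give a subsequence with $E_{x,r}\to F$ in $L^1_{loc}$ and $\mu_{E_{x,r}}\rightharpoonup^\ast\mu_F$. The key observation is that $\mu_{E_{x,r}}(B(0,s))\cdot\nu_E(x)/|\mu_{E_{x,r}}|(B(0,s))\to1$. Lower semicontinuity of total variation, together with weak convergence of the vector measures, then forces $|\mu_F|=\mu_F\cdot\nu_E(x)$. So $\mu_F=\nu_E(x)|\mu_F|$ has constant direction, and therefore $\nabla\ind_F$ is parallel to a fixed vector. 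This forces $F$ to be the half-space $H=\{y:y\cdot\nu_E(x)\le0\}$; the density estimates exclude the trivial cases $F=\emptyset$ and $F=\R^n$.

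\emph{Step 3: convergence of total variations.} I would upgrade weak-$\ast$ convergence to convergence $|\mu_{E_{x,r}}|(B(0,1))\to|\mu_H|(B(0,1))=\omega_{n-1}$, using $\partial B(0,1)$ being $|\mu_H|$-null. This yields the density $1$, independently of the subsequence.

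I expect Step 3 to be the main obstacle, since weak convergence alone gives only lower semicontinuity of total variation. I would first try to prove the matching $\limsup$ bound directly. By the divergence theorem on $E_{x,r}\cap B(0,1)$ and $H\cap B(0,1)$ with the constant field $\nu_E(x)$, the quantity $\mu_{E_{x,r}}(B(0,1))\cdot\nu_E(x)$ differs from $\mu_H(B(0,1))\cdot\nu_E(x)$ only by the integral over $\partial B(0,1)$ of $\ind_{E_{x,r}}-\ind_H$ (averaging over radii $s\in(1,1+\varepsilon)$ if necessary). This boundary term tends to $0$ by $L^1_{loc}$ convergence. Since the left side is asymptotically $|\mu_{E_{x,r}}|(B(0,1))$, the limsup bound follows. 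Step 2 also needs care: it relies on the density estimates of Step 1 to ensure the limit is nontrivial.
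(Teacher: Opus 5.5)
The paper gives no proof of this statement; it only cites Maggi's book (Theorem 15.9). Your plan follows the route taken there: density estimates, blow-up to a half-space, and $|\mu_E|(B(x,r))/\omega_{n-1}r^{n-1}\to 1$ at every $x\in\partial^\ast E$. Steps 1--3 are essentially sound. The final deduction, ``the comparison theorems do the rest'', is not correct as you state it.

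\textbf{Where it fails.} The lower comparison is fine: if $\limsup_{r\to0^+}\mu(B(x,r))/\omega_{n-1}r^{n-1}\ge t$ on $B$, then $\mu\llcorner B\ge t\,\Hd^{n-1}\llcorner B$. This works because Vitali's covering produces balls centred on $B$, and $\Hd^{n-1}$ is dominated by ball coverings. The upper comparison does not hold with the constant $t$.
\begin{itemize}
\item In the definition of $\Hd^{n-1}$ you cover $B$ by arbitrary sets $F$, each weighted by $\omega_{n-1}(\mathrm{diam}\,F/2)^{n-1}$.
\item A density bound only controls balls centred at points of $B$. A set of diameter $d$ meeting $B$ at $y$ is only guaranteed to lie in $B(y,d)$.
\item So you get $\mu(F\cap B)\le (t+\varepsilon)\,\omega_{n-1}d^{n-1}=2^{n-1}(t+\varepsilon)\,\omega_{n-1}(d/2)^{n-1}$.
\end{itemize}
The general statement, in the form found in the standard references, is $\mu\llcorner B\le 2^{n-1}t\,\Hd^{n-1}\llcorner B$. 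Hence density one on $\partial^\ast E$ only yields $\Hd^{n-1}\llcorner\partial^\ast E\le|\mu_E|\le 2^{n-1}\Hd^{n-1}\llcorner\partial^\ast E$. That is, $|\mu_E|=\theta\,\Hd^{n-1}\llcorner\partial^\ast E$ with $1\le\theta\le 2^{n-1}$, not $\theta\equiv1$.

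\textbf{What is missing.} You need the rectifiability of $\partial^\ast E$; this is how the standard proof closes the argument.
\begin{itemize}
\item Make your blow-up result locally uniform on compact pieces of $\partial^\ast E$ (an Egorov/Lusin-type argument).
\item Use it to show that $\partial^\ast E$ is covered, up to an $\Hd^{n-1}$-null set, by countably many Lipschitz graphs.
\item Consequently $\Hd^{n-1}(\partial^\ast E\cap B(x,r))/\omega_{n-1}r^{n-1}\to1$ for $\Hd^{n-1}$-a.e.\ $x\in\partial^\ast E$.
\item The two-sided bound makes $|\mu_E|$ and $\Hd^{n-1}\llcorner\partial^\ast E$ mutually absolutely continuous Radon measures.
\item Besicovitch differentiation then gives $\theta(x)=\lim_{r\to0^+}|\mu_E|(B(x,r))/\Hd^{n-1}(\partial^\ast E\cap B(x,r))=1$ a.e.
\end{itemize}
After that, the rest of your conclusion goes through as you wrote it.

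\textbf{Smaller points.}
\begin{itemize}
\item In Step 1 there is no minimality, so you cannot compare $E$ with $E\setminus B(x,r)$. The volume bounds come from the isoperimetric inequality applied to $E\cap B(x,r)$. Combine it with $|\mu_E|(B(x,r))\le 2\Hd^{n-1}(E\cap\partial B(x,r))$, which is what your divergence-theorem computation actually gives; this yields a differential inequality for $r\mapsto|E\cap B(x,r)|$.
\item In Step 2 the constant-normal property only gives $F=\{y\cdot\nu_E(x)<a\}$. You need the volume bounds, passed to the limit at every scale, to force $a=0$. Otherwise $|\mu_F|(B(0,1))\neq\omega_{n-1}$.
\item Step 3 is not a real obstacle. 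With $B_s=B(0,s)$, your Step 2 semicontinuity argument already gives, for a.e.\ $s$,
\[
|\mu_F|(B_s)\le\lim_h|\mu_{E_{x,r_h}}|(B_s)=\lim_h\mu_{E_{x,r_h}}(B_s)\cdot\nu_E(x)=\mu_F(B_s)\cdot\nu_E(x)\le|\mu_F|(B_s).
\]
\end{itemize}
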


	\subsubsection{Lower semi-continuity of perimeter and compactness} \label{subsec:Lsc}
	The standard approach to showing the existence of a minimizer of a minimal surface problem is to construct a minimizing sequence of sets and using  the properties of the Gauss-Green measure to prove that it will converge to some admissible set. \\
	
	Although we will later see that this method fails when it comes to our optimization problem, it is still relevant to see how it works and why exactly it will fail in our case. We first need a notion of convergence to describe what it means for a sequence of sets $E_n$ to converge to a certain limit $E$ in $\R^n$. This means that we want to quantify the difference between $E_n$ and $E$ and require it to tend to zero as $n \rightarrow\infty$. A logical choice is then to require the the volume of their symmetric difference to vanish. Since we want to describe this in general for sets of locally finite perimeter, we need the convergence to be described locally too, leading to the following definition.
	\begin{definition}
		For Lebesgue measurable sets $(E_n)_{n \in \N}$ and E in $\R^n$, we say that $E_n$ \textbf{locally converges} to $E$ and write $E_n \xrightarrow{loc} E$, if
		\[
		\lim_{n \rightarrow\infty} |K \cap (E \Delta E_n)| = 0 
		\]
		for all $K \subset \R^n$ compact. 
	\end{definition}
	It turns out that local convergence, together with a well-behaving perimeter, is sufficient for the local perimeter to be lower semi-continuous, that is
	\begin{proposition} \label{lscProp}
		If $(E_n)_{n \in \N}$ is a sequence of sets of locally finite perimeter in $\R^n$, with
		\begin{equation} \label{limsupcpt}
			E_n \xrightarrow{loc} E, \qquad \limsup_{n \rightarrow\infty} P(E_n; K) < \infty
		\end{equation}
		for all $K \subset \R^n$ compact, then $E$ is of locally finite perimeter in $\R^n$ and $\mu_{E_h} \xrightharpoonup{\ast} \mu_E$.\\ Moreover, for every open set $A \subset \R^n$ we have 
		\[
		P(E;A) \leq \liminf_{\ninfty} P(E_n;A).
		\]
	\end{proposition}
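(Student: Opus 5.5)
The proof uses the distributional characterization of the Gauss--Green measure, namely \eqref{eq:GGmeasure} together with Definition \ref{def:perimeter}, in three steps: pass to the limit in the divergence integrals, deduce a uniform local bound and hence local finiteness of the perimeter of $E$, and obtain lower semicontinuity from a duality formula for the total variation.

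\emph{Step 1: passing to the limit in the divergence integrals.} Fix $T\in C^1_c(\R^n;\R^n)$ and let $K$ be a compact set containing $\textnormal{spt}\, T$. Then
\[
\Bigl|\int_{E_n}\divv T-\int_E \divv T\Bigr|\le \sup|\divv T|\,|K\cap(E_n\Delta E)|\to 0 ,
\]
so that, by \eqref{eq:GGmeasure} applied to $E_n$,
\[
\int_E\divv T=\lim_{n\to\infty}\int_{\R^n} T\cdot \textnormal{d}\mu_{E_n}.
\]

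\emph{Step 2: local finiteness of the perimeter of $E$.} Now take $|T|\le1$ with $\textnormal{spt}\, T\subseteq K$. Then $\bigl|\int T\cdot\textnormal{d}\mu_{E_n}\bigr|\le P(E_n;K)$, so
\[
\int_E \divv T\le \limsup_{n\to\infty} P(E_n;K)<\infty .
\]
This is exactly condition \eqref{eq:LocFin}. Hence $E$ has locally finite perimeter, and the Proposition on the Gauss--Green measure provides $\mu_E$.

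\emph{Step 3: weak-$*$ convergence.} Step 1 already gives $\int T\cdot \textnormal{d}\mu_{E_n}\to\int T\cdot\textnormal{d}\mu_E$ for every $T\in C^1_c$. To reach an arbitrary $\varphi\in C_c(\R^n;\R^n)$, approximate $\varphi$ uniformly by mollifications $T_\varepsilon\in C^1_c$, all supported in a fixed compact $K$. The error is then controlled by
\[
\|\varphi-T_\varepsilon\|_\infty\Bigl(\sup_{n\ge n_0} |\mu_{E_n}|(K)+|\mu_E|(K)\Bigr),
\]
which is finite by \eqref{limsupcpt} for $n_0$ large. Letting $\varepsilon\to0$ after $n\to\infty$ gives $\mu_{E_n}\xrightharpoonup{\ast}\mu_E$.

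\emph{Step 4: lower semicontinuity.} The key tool is the duality formula, for $A$ open,
\[
P(F;A)=|\mu_F|(A)=\sup\Bigl\{\int T\cdot \textnormal{d}\mu_F : T\in C^1_c(A;\R^n),\ |T|\le1\Bigr\}.
\]
For any admissible $T$ in this supremum, with $F=E$, Step 1 gives
\[
\int T\cdot\textnormal{d}\mu_E=\lim_{n\to\infty}\int T\cdot\textnormal{d}\mu_{E_n}\le\liminf_{n\to\infty} P(E_n;A),
\]
since each term is bounded by $|\mu_{E_n}|(A)$. Taking the supremum over $T$ yields $P(E;A)\le\liminf_n P(E_n;A)$.

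\emph{Main obstacle.} The only nontrivial point is the duality formula itself. I would justify it as follows.
\begin{itemize}
\item By the polar decomposition $\mu_F=\nu_F|\mu_F|$ with $|\nu_F|=1$ $|\mu_F|$-a.e., we have $|\mu_F|(A)=\int_A \nu_F\cdot \textnormal{d}\mu_F$.
\item By Lusin's theorem, approximate $\nu_F$ on a compact $K\subset A$ by continuous fields $\psi$ with $|\psi|\le1$ and compact support in $A$, outside a set of small $|\mu_F|$-measure.
\item Mollify $\psi$ to obtain $C^1_c(A)$ fields, still with $|T|\le 1$, converging $|\mu_F|$-a.e. and boundedly; dominated convergence then controls the error.
\item Exhaust $A$ by compact sets $K$.
\end{itemize}
This shows that the supremum is at least $|\mu_F|(A)$. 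The reverse inequality is immediate.
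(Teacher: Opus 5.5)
The paper does not prove this proposition. It is stated without proof as a standard fact from \cite{maggi2012sets}, so there is no in-text argument to compare with.

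Your proof is correct and self-contained.
\begin{itemize}
\item Step~1 passes to the limit in $\int_{E_n}\divv T$ using $|K\cap(E_n\Delta E)|\to0$, and Step~2 reads off condition \eqref{eq:LocFin} from the $T$-independent bound $\limsup_n P(E_n;K)$. Together they are exactly what is needed for local finiteness and the existence of $\mu_E$.
\item Step~3 correctly upgrades convergence from $C^1_c$ to $C_c$ test fields, using uniform density together with the local bound on $|\mu_{E_n}|(K)$. It gives convergence of the whole sequence directly. This avoids the alternative route of extracting a weak-$*$ convergent subsequence by compactness of Radon measures, identifying its limit with $\mu_E$, and then invoking uniqueness.
\item Step~4 uses only Step~1 and the duality formula, not Step~3.
\end{itemize}

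Your Lusin-plus-mollification justification of the duality formula is sound. Alternatively, you could simply cite the general identity $|\mu|(A)=\sup\{\int\varphi\cdot\textnormal{d}\mu:\varphi\in C_c(A;\R^n),\ |\varphi|\le1\}$ for vector Radon measures on open sets. You would then combine it with uniform density of $C^1_c(A;\R^n)$ in $C_c(A;\R^n)$; mollification preserves $|\varphi|\le1$ and keeps supports inside $A$ for small parameters.
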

	With this proposition  we have established a connection between the local convergence of sets, and the convergence of their perimeters. 
	This proposition can now be used to prove the following theorem, which provides sufficient conditions for the existence of a solution to minimization problems involving the perimeter.
	Indeed, with Theorem \ref{ExistTh} at our disposal the only thing left in order to prove existence is showing that there is some minimizing sequence that is uniformly bounded.
	
	\begin{theorem}[Compactness] \label{ExistTh}
		If $R>0$ and $(E_n)_{n \in \N}$ are sets of finite perimeter in $\R^n$, with
		\begin{equation} \label{eq:ExistTh_SupCond}
			\sup_{\ninfty} P(E_n) < \infty
		\end{equation}
		\begin{equation} \label{eq:ExistTh_BallCond}
			E_n \subset B_R \quad \textnormal{for all } n \in \N
		\end{equation}
		then there exists a set $E \subset B_R$ and a subsequence $(E_{h_n})_n$
		\[
		E_{h(n)} \rightarrow E, \quad \mu_{E_{h(n)}} \xrightharpoonup{\ast} \mu_E.
		\]
	\end{theorem}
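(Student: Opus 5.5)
The plan is the classical route through $L^1$-precompactness of the indicator functions $u_n \coloneqq \ind_{E_n}$, followed by an application of Proposition \ref{lscProp}. Let $M \coloneqq \sup_n P(E_n)<\infty$. By \eqref{eq:ExistTh_BallCond} we have $\|u_n\|_{L^1}\le |B_R|$ and $\textnormal{spt}\, u_n\subset \overline{B_R}$. The first step is a uniform translation estimate:
\[
\int_{\R^n} |u_n(x-y)-u_n(x)|\dd x \le |y|\, P(E_n) \le M|y| \qquad \text{for all } y\in\R^n .
\]
I would prove it by duality, using only the Gauss--Green measure \eqref{eq:GGmeasure}. Fix $\varphi\in C^1_c(\R^n)$ with $|\varphi|\le 1$. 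Then
\[
\int (u_n(x-y)-u_n(x))\varphi(x)\dd x=\int_{E_n}\bigl(\varphi(x+y)-\varphi(x)\bigr)\dd x=\int_0^1\!\!\int_{E_n}\divv\bigl(\varphi(x+ty)\,y\bigr)\dd x\dd t .
\]
By \eqref{eq:GGmeasure} the inner integral equals $\int \varphi(x+ty)\,y\cdot \dd\mu_{E_n}$, which is bounded in absolute value by $|y|\,|\mu_{E_n}|(\R^n)=|y|P(E_n)$. Taking the supremum over such $\varphi$ (which are dense in the unit ball of $L^\infty$ tested against $L^1$ functions) gives the estimate.

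The second step uses a standard mollifier $\rho_\varepsilon$. From the translation estimate, $\|u_n*\rho_\varepsilon-u_n\|_{L^1}\le M\varepsilon$ uniformly in $n$. For each fixed $\varepsilon$, the functions $u_n*\rho_\varepsilon$ are supported in $B_{R+1}$ (for $\varepsilon<1$), are bounded by $1$, and are equi-Lipschitz with constant $\|\nabla\rho_\varepsilon\|_{L^1}$. Arzelà--Ascoli then gives a uniformly convergent, hence $L^1$-convergent, subsequence. Applying this with $\varepsilon=1/k$ and extracting a diagonal subsequence $h(n)$, the triangle inequality gives
\[
\limsup_{n,m}\|u_{h(n)}-u_{h(m)}\|_{L^1}\le 2M/k \quad\text{for every } k,
\]
so $(u_{h(n)})$ is Cauchy in $L^1$ and converges to some $u$. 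Passing to a further subsequence that converges a.e., $u$ takes values in $\{0,1\}$ a.e. and vanishes a.e. outside $B_R$. Hence $u=\ind_E$ with $E\subset B_R$, after modifying $E$ on a null set, and $E_{h(n)}\to E$ in $L^1$, in particular locally.

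Finally, $\limsup_n P(E_{h(n)};K)\le M$ for every compact $K$, so both conditions in \eqref{limsupcpt} hold. Proposition \ref{lscProp} then yields that $E$ has locally finite perimeter, that $\mu_{E_{h(n)}}\xrightharpoonup{\ast}\mu_E$, and, taking $A=\R^n$, that $P(E)\le M$.

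The main obstacle I expect is the translation estimate in the first step, since it must be obtained directly from the distributional definition without a smooth-approximation theorem. The duality argument above handles this, but the interchange of the $t$-integral with the Gauss--Green identity requires care: it is legitimate because $x\mapsto\varphi(x+ty)\,y$ lies in $C^1_c$ with support bounds uniform in $t\in[0,1]$. The remaining steps are routine.
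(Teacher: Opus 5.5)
Your proof is correct, and it is essentially the standard argument that the paper cites without reproducing (\cite[Theorem 12.26]{maggi2012sets}). That argument has the same steps as yours: the uniform estimate $\|\ind_{E_n}*\rho_\varepsilon-\ind_{E_n}\|_{L^1}\le \varepsilon P(E_n)$, Arzel\`a--Ascoli with a diagonal extraction to get an $L^1$-Cauchy subsequence whose limit is the indicator of a set in $B_R$, and Proposition \ref{lscProp} for the weak-$*$ convergence and the bound $P(E)\le \sup_n P(E_n)$; the only difference is that you obtain the translation estimate by duality directly from the Gauss--Green identity, whereas Maggi derives it through regularization, and your route is equally valid.
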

	For the proof we refer to \cite[Theorem 12.26]{maggi2012sets}.
	
	\subsubsection{Characterization of half-spaces} \label{subsec:halfspace}
	The following result states that whenever the normal of a set of locally finite perimeter is the same almost everywhere, then that set must equal a half-space. We will need this proposition to characterize solutions to the maximization problem later on. Since it is of great importance to our strategy, for the reader's convenience we will provide the proof as well, taken from \cite{maggi2012sets}.
	
	\begin{proposition}
		\label{halfspace}
		If $F$ is a set of locally finite perimeter in $\R^n$ and $\nu$ is such that $\nu_F(y)=v$ for $|\mu_F|$-a.e. $y \in \partial^\ast F$, then there exists $a \in \R^n$ so that we have 
		\[
		F = \Bigl\{ z \in \R^n : z \cdot v < a \Bigr\}.
		\]
	\end{proposition}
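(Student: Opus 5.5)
The plan is to prove Proposition \ref{halfspace} in two steps: first show that the distributional gradient of $\ind_F$ is a constant vector times a nonnegative measure, which forces $\ind_F$ to be monotone along $v$ and constant along directions orthogonal to $v$; then identify the resulting one-dimensional profile as the indicator of a half-line. By De Giorgi's structure theorem (Theorem \ref{thm:Giorgo}), $\mu_F = v\,\Hd^{n-1}\llcorner\partial^\ast F = v\,|\mu_F|$.

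\textbf{Step 1: invariance orthogonal to $v$.} Take any $w \perp v$ and any $\varphi \in C^1_c(\R^n)$. Testing \eqref{eq:GGmeasure} with $T=\varphi w$ gives
\[
\int_F \nabla\varphi\cdot w \dd x = \int \varphi\, w\cdot v \dd|\mu_F| = 0 .
\]
Hence the distributional derivative $\partial_w \ind_F$ vanishes. After mollification, $\ind_F * \rho_\varepsilon$ is smooth with zero derivative in every direction orthogonal to $v$, so it depends only on $z\cdot v$. Letting $\varepsilon\to 0$, we obtain $\ind_F(z) = g(z\cdot v)$ for a.e.\ $z$, with $g$ taking values in $\{0,1\}$.

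\textbf{Step 2: monotonicity along $v$.} Test with $T=\varphi v$ and $\varphi\ge 0$:
\[
\int_F \partial_v \varphi \dd x = \int \varphi \dd|\mu_F| \ge 0 .
\]
So $-\partial_v \ind_F \ge 0$ as a distribution, meaning $\ind_F$ is nonincreasing along $v$. Transferring to $g$, the distributional derivative $g'$ is a nonpositive measure on $\R$. Thus $g$ agrees a.e.\ with a nonincreasing function, and a nonincreasing $\{0,1\}$-valued function is $\ind_{(-\infty,a)}$ for some $a\in[-\infty,+\infty]$.

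\textbf{Step 3: conclusion.} Since $F$ has reduced boundary carrying a nonzero measure (implicitly, $\partial^\ast F\neq\emptyset$), neither $g\equiv 0$ nor $g\equiv 1$ is possible, so $a$ is finite. This gives $F=\{z: z\cdot v<a\}$ up to null sets, which is the meaning of equality for sets of finite perimeter.

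The main technical point is Step 1: passing rigorously from vanishing distributional derivatives in all directions orthogonal to $v$ to dependence on $z\cdot v$ alone. Mollification handles this cleanly, because the convolution commutes with the distributional derivative.
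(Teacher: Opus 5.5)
Your proof is correct and follows essentially the same route as the paper. Both arguments test the Gauss--Green identity with $T=\varphi w$ for $w\perp v$ and with $T=\varphi v$, then use mollification to reduce to a nonincreasing $\{0,1\}$-valued profile in the variable $z\cdot v$. The only difference in order is that the paper mollifies first and works with $u_\varepsilon$ directly, whereas you derive the distributional statements for $\ind_F$ and then mollify. Your Step 3 also makes explicit a point the paper glosses over: $a$ is finite only because $\mu_F\neq 0$ (otherwise $F=\emptyset$ or $F=\R^n$ satisfy the hypothesis vacuously), and the equality holds up to Lebesgue-null sets.
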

	
	\begin{proof}[Proof of Proposition \ref{halfspace}]
		In order to proof this Proposition we approximate $F$ with smooth surfaces, after which we show that their properties are preserved in the limit. To this is end, we introduce a regularizing kernel.
		Let $\rho \in C^\infty_c(B,[0,\infty))$ be a function satisfying $\int_B \rho dx =1$ and $\rho(-x)=\rho(x)$. Now for given $\varepsilon > 0$ we define 
		\[
		\rho_\varepsilon(x) = \frac{1}{\varepsilon^n}\rho \Bigl( \frac{x}{\varepsilon} \Bigr).
		\]
		Then $\rho_\varepsilon \in C^\infty_c(B_\varepsilon,[0,\infty))$ and for a given function $u \in L^1_\textnormal{loc}(\R^n)$ we define the $\varepsilon$-regularization of $u$ as the convolution between $u$ and $\rho_\varepsilon$, that is 
		\[
		u_\varepsilon(x) = (u \ast \rho_\varepsilon)(x) = \int_{\R^n} \rho_\varepsilon(x-y)u(y) \dd y.
		\]
		Now let $u_\varepsilon = \chi_F \ast \rho_\varepsilon \in C^\infty(\R^n)$, where $\chi_F$ is the characteristic function of $F$. It is not hard to show that $(\chi_F \ast \rho_\varepsilon) \rightarrow\chi_F$ in $L^1_{loc}(\R^n)$ as $\varepsilon \rightarrow0$. If $T \in C^1_c(\R^n,\R^n)$, then we have $\int_{\R^n} u_\varepsilon \textnormal{div }T = \int_F \textnormal{div}(T \ast \rho_\varepsilon)$. Indeed, by symmetry of $\rho_\varepsilon$ and Fubini's theorem\footnote{See Theorem 5.2.2 in \cite{cohn2013measure}} we get
		\begin{align*}
			\int_{\R^n} u_\varepsilon \divv T &= 
			\int_{\R^n} \Bigl( \int_{\R^n} \rho_\varepsilon(x-y)u(y)\dd y \Bigr)\: \divv T(x) \dd x \\ &=
			\int_{\R^n} \Bigl(\int_{\R^n} \divv T(x)\rho_\varepsilon(y-x) \dd x \Bigr)\: u(y) \dd y \\ &=
			\int_F \divv(T \ast \rho_\varepsilon).
		\end{align*}
		As $T$ is compactly supported, we have $\int_{\R^n} \textnormal{div}(u_\varepsilon T) = 0$, so that by integration by parts it follows that
		\begin{equation} \label{eq:halfspace1}
			- \int_{\R^n} \nabla u_\varepsilon \cdot T 
			=
			\int_F \textnormal{div}(T \ast \rho_\varepsilon) 
			= 
			\int_{\R^n}(T \ast \rho_\varepsilon) \cdot \mu_F 
			= 
			\int_{\partial^\ast F} (T \ast \rho_\varepsilon) \cdot \nu \dd |\mu_F|. 
		\end{equation}
		If $T = \varphi \nu'$ for $\varphi \in C^1_c(\R^n, \R^n)$ and $\nu'$ is a unit vector orthogonal to $\nu$, then \eqref{eq:halfspace1} gives 
		\[
		-\int_\R^n \frac{\partial u_\varepsilon}{\partial \nu'} \varphi = 0 \quad \forall  \varphi \in C^1_c(\R^n, \R^n),
		\]
		which implies $\dfrac{\partial u_\varepsilon}{\partial \nu'} \equiv 0$. If instead we choose $T = \varphi \nu$, $\varphi \in C^1_c(\R^n, \R^n)$, then applying \eqref{eq:halfspace1} again we get
		\[
		-\int_{\R^n} \frac{\partial u_\varepsilon}{\partial \nu} \varphi = \int_{\partial^\ast F} \varphi_\varepsilon \dd |\mu_F|.
		\]
		Note that the right-hand side is non-negative for every $\varphi \geq 0$. Since $\varphi \in C^1_c(\R^n, \R^n)$ is arbitrary and $u_\varepsilon$ is smooth, this implies that $\dfrac{\partial u_\varepsilon}{\partial \nu} \leq 0$ on $\R^n$. Together with $\dfrac{\partial u_\varepsilon}{\partial \nu'} \equiv 0$ we can therefore conclude that $u_\varepsilon(y)$ is a decreasing function of the variable $(y \cdot \nu)$, that is, there exists a decreasing function $f_\varepsilon \in C^\infty(\R;[0,1])$ such that
		\[
		(\chi_F \ast \rho_\varepsilon)(y)=u_\varepsilon(y)=f_\varepsilon(y \cdot \nu) \quad \forall y \in \R^n.
		\]    
		Letting $\varepsilon \rightarrow0$ we find $\lim_{\varepsilon \rightarrow0} f_\varepsilon(t) \in \{0,1\}$ for a.e. $t \in \R$. Hence there exists $a \in \R$ such that $f_\varepsilon(t) \rightarrow\chi_{(-\infty, a)}(t)$ for a.e. $t \in \R$, or 
		\[
		(\chi_F \ast \rho_\varepsilon)(y) = u_\varepsilon(y) \rightarrow\chi_{(-\infty,a)}(y \cdot \nu) \quad \textnormal{for a.e. } y \in \R^n.
		\]
		The result now follows as $(\chi_F \ast \rho_\varepsilon) \rightarrow\chi_F$ in $L^1_{loc}(\R^n).$
	\end{proof}

	\subsection{$Q_\pom$ for smooth boundaries}
	As a last step before discussing our results when it comes to values of $q_\pom(x)$ when $x \in \pom$ is a singularity on the boundary of $\Omega$, we show that when there are no singularities to be considered, then $q_\pom = 1$ everywhere and thus $Q_\pom = 1$. This was proved by Giusti in \cite{giusti1976boundary}. For the reader's convenience, we report here the proof.
	
	\begin{theorem} \label{thm:Giusti}
		Let $\pom$ be $C^1$ in a neighbourhood of $x_0 \in \pom$. Then, $q_{\pom}(x_0) = 1$.
	\end{theorem}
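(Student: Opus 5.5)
Both inequalities reduce, after flattening the boundary near $x_0$, to comparing the flat half-space with its $C^1$ perturbations.

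\textbf{Lower bound $q_\pom(x_0)\geq 1$.} We test the supremum in \eqref{Def:qpom} with thin slabs. Choose coordinates with $x_0=0$, inner normal $e_n$, and $\Omega\cap B(0,\rho_0)=\{x_n>\varphi(x')\}$, where $\varphi\in C^1$, $\varphi(0)=0$ and $\nabla\varphi(0)=0$. For small $\rho$, take $D\subset\{|x'|<\rho/2\}$ a disc and
\[
E_\delta=\{x\in\Omega : x'\in D,\ \varphi(x')<x_n<\varphi(x')+\delta\}.
\]
Its trace on $\pom$ is $\ind$ of the graph of $\varphi$ over $D$, which has area $\int_D\sqrt{1+|\nabla\varphi|^2}$. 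Its perimeter in $\Omega$ is the area of the shifted graph plus a lateral part of order $\delta\,\Hd^{n-2}(\partial D)$. Letting $\delta\to0$ makes the ratio tend to $1$, so $q_\pom(x_0)\ge 1$.

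\textbf{Upper bound.} We use the divergence theorem with a vector field that is almost the inward normal. Fix $\varepsilon>0$ and choose $\rho$ so small that $|\nu_\Omega(y)+e_n|<\varepsilon$ for every $y\in\pom\cap B(0,\rho)$, with $\nu_\Omega$ the outer normal. Let $E\subset B(0,\rho)\cap\Omega$ have finite perimeter and take the constant field $T=-e_n$, cut off outside a neighbourhood of $\overline{B(0,\rho)}$ so that it is compactly supported. Apply the generalized Gauss--Green formula (Theorem \ref{thm:Giorgo}) to $E$, viewed as a set of finite perimeter in $\R^n$. Its reduced boundary splits into the part inside $\Omega$, of measure $P(E;\Omega)$, and the part on $\pom$, where $\nu_E=\nu_\Omega$ and the density is $\Tr(\ind_E)$. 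Since $\divv T=0$ on $E$, this gives
\[
0=\int_{\partial^*E\cap\Omega}T\cdot\nu_E\dd\Hd^{n-1}+\int_\pom \Tr(\ind_E)\,T\cdot\nu_\Omega\dd\Hd^{n-1}.
\]
On $\pom\cap B(0,\rho)$ we have $T\cdot\nu_\Omega=-e_n\cdot\nu_\Omega\ge 1-\varepsilon$, while $|T\cdot\nu_E|\le1$. Hence
\[
(1-\varepsilon)\int_\pom\Tr(\ind_E)\dd\Hd^{n-1}\le P(E;\Omega).
\]
Taking the supremum over $E$ and then letting $\rho\to0$ and $\varepsilon\to0$ gives $q_\pom(x_0)\le1$.

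\textbf{Main obstacle.} The delicate step is justifying the decomposition of $\mu_E$ for $E\subset\Omega$ into an interior part and a boundary part carried by $\Tr(\ind_E)\nu_\Omega\Hd^{n-1}\llcorner\pom$. This is the standard BV fact that extension by zero satisfies $D\ind_E=D\ind_E\llcorner\Omega-\Tr(\ind_E)\nu_\Omega\Hd^{n-1}\llcorner\pom$ for Lipschitz $\Omega$. We assume this holds, consistent with the defining property of $\Tr$ stated in the introduction.
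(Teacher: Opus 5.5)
Your proof is correct, and for the upper bound it is the paper's argument. The paper also plugs the constant field $e_n$ into the Gauss--Green formula and uses $|T\cdot\nu_E|\le 1$ inside $\Omega$. It then routes the boundary term through the projection $\pi_E$, obtaining $\Hd^{n-1}(\partial^*E\cap\pom)\le(1+M_r)\Hd^{n-1}(\pi_E)\le(1+M_r)P(E;\Omega)$, which is the same estimate as your pointwise bound $-e_n\cdot\nu_\Omega\ge 1-\varepsilon$. Your thin-slab construction additionally proves the lower bound $q_\pom(x_0)\ge 1$, which the paper's proof omits and only asserts in the introduction. The cutoff of $T$ and the explicit trace decomposition of $\mu_E$ are points the paper uses without comment.
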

	
	\begin{proof}
		We can assume that $x_0=0$ and that $\pom$ can be represented as the graph of a $C^1$ function $f(x')$ for $x'=(x_1, \cdots, x_{n-1})$ such that $f(0)=0$ and $\nabla f(0) = 0$ and
		\[
		x_n > f(x') \quad \textnormal{ for } x \in \Omega \cap B(0,r) 
		\]
		Let $E \subset \Omega$ and let $\pi_E$ be the projection of  $\partial^\ast E \cap \pom$ on the hyperplane $x_n=0$. Then we have 
		\begin{equation*}
			\int_{\pom} \Tr(\ind_E) \: \dd \Hd^{n-1} = \Hd^{n-1}(\partial^\ast E \cap \pom) = \int_{\pi_E} \sqrt{1+|\nabla f(x')|^2} \dd x'
		\end{equation*}
		Furthermore setting $M_r \coloneqq \sup \{|\nabla f|(x), |x'| < r \}$ we get $M_r \rightarrow 0$ as $r \rightarrow 0$ and 
		\begin{equation} \label{eq:C1proof1}
			\int_\pom \Tr(\ind_E) \dd \Hd^{n-1} \leq
			\Bigl(\sqrt{1+M_r^2}\Bigr)\Hd^{n-1}(\pi_E)
			\leq
			(1+M_r)\Hd^{n-1}(\pi_E)
		\end{equation} 
		Now, let $T  \equiv e_n = (0, \cdots, 0, 1) \in \contvf$ be a constant vector field. Then by Theorem \ref{thm:Giorgo} we have
		\[
		0 = \int_E \divv T = \int_{\partial^\ast E \cap \Omega} T \nu_E \Hd^{n-1} + \int_{\partial^\ast E \cap \partial \Omega} T \nu_E \Hd^{n-1},  
		\]
		therefore  we get 
		\[
		- \int_{\partial^\ast E \cap \partial \Omega} T \cdot \nu_E \Hd^{n-1} = \int_{\partial^\ast E \cap \Omega} T \cdot \nu_E \Hd^{n-1} \leq \mu_E(\Omega) = P(E;\Omega). 
		\]
		On the other hand, 
		\begin{align*}
			-\int_{\partial^\ast E \cap \partial \Omega} T \cdot \nu_E \Hd^{n-1} &= -\int_{\pi_E} T \cdot \nu_E |\text{det } Df(x)| \dd x \\
			&= -\int_{\pi_E} T \cdot \frac{(\nabla f, -1)}{\sqrt{1+|\nabla f|^2}} \sqrt{1+|\nabla f|^2} \dd x \\
			&= - \int_{\pi_E} -1 \dd x \\
			&= \Hd^{n-1}(\pi_E).
		\end{align*}
		As such we find
		\begin{equation} \label{eq:C1proof2}
			P(E;\Omega) \geq \Hd^{n-1}(\pi_E)
		\end{equation}
		Combining \eqref{eq:C1proof1} and \eqref{eq:C1proof2} we get
		\begin{equation}
			\frac{1}{P(E;\Omega)} \int_\pom \Tr(\ind_E)(x) \dd\Hd^{n-1} \leq (1+M_r)
		\end{equation}
		If we now let $r \rightarrow 0$ the result follows.
	\end{proof}

	

	\section{The maximization problem} 
	Now that we have established the required mathematical language and preliminary results we can show why existence of a solution can't be guaranteed and formulate optimal constants for a specific class of singularities. In order to abbreviate notation, we will define the following symbols.
	\begin{definition} \label{def:alpha}
		Let $\C$ be a cone. We define $\alpha$ as
		\[ 
		\alpha(\C) \coloneqq \sup \Bigl\{ \frac{1}{P(E;\C)} \int_{\partial \C} \Tr(\ind_E)(x) \dd \Hd^{n-1} : |E| > 0, E \subseteq \C \text{ of finite perimeter} \Bigr\}.
		\]
	\end{definition}
	Note that for some point $x \in \pom$ that is locally described by a cone $\mathcal{C}$, then $q_\pom(x) = \alpha(\mathcal{C})$.
	Moreover, for any set $E \subseteq \C$ with positive volume, we write
	\[
	\beta(E) \coloneqq  \frac{1}{P(E;\C)} \int_{\partial \C} \Tr(\ind_E)(x) \dd\Hd^{n-1}.
	\]
	As such, $\alpha(\mathcal{C})$ now corresponds to the supremum of values that $\beta(E)$ can attain for admissible sets $E$ in a given cone $\C$.
	
	\subsection{Uncertainty of existence of a solution}
	While the value $\alpha(\mathcal{C})$ always exists, it is not necessarily true that there exists a solution $E$ to the maximization problem. As such, we will investigate this issue in this section first. We start off be remarking that if a solution exists, it will not be unique. Indeed let $E \subseteq \C$ be any set inside the cone and $\lambda > 0$. Then we have 
	\[
	\int_{\partial \C} \ind_{\lambda E}(x) \dd\Hd^{n-1} = \int_{\lambda^{n-1}\partial \C} \ind_{E}(x) \dd(\lambda^{n-1}\Hd^{n-1}) = \lambda^{n-1}\int_{\partial \C} \ind_{E}(x) \dd \Hd^{n-1},
	\]
	as follows clearly from the definition of $\Hd^{n-1}$ and the fact that $\C$ is scaling invariant. Similarly, $$P(\lambda E;\C) = \lambda^{n-1}P(E;\C).$$
	As such, it follows that $\beta(\lambda E) = \beta(E)$, thus any scalar multiple of an optimal set will also be optimal. 
	\\

	As we discussed in Section \ref{subsec:Lsc}, the general strategy to prove existence of a minimizer is by constructing a sufficiently well-behaved minimizing sequence and taking a converging subsequence using Theorem \ref{ExistTh}. However, in our case, even though we can construct sequences that meet the conditions of Theorem \ref{ExistTh}, we can not guarantee that the limit $E$ meets our condition that $|E|>0$. Indeed, the scaling invariance of solutions makes it possible to uniformly bound the perimeters, but this scaling could lead to $|E_n|$ converging to 0. On the other hand, if we fix $|E_n| = 1$, we can not ensure that the sequence stays bounded. \\
	A direct example can be given by letting $\C$ be a 3-dimensional cone generated by 
	\[
	f: \mathbb{S}^1 \rightarrow (0,\infty], \quad \theta \mapsto \sqrt{1+\tan(\theta)},
	\]which describes a plane folded in a $90^\circ$ angle as you can see in Figure \ref{fig:bookcone}. 
	\begin{figure} 
		\centering
		\includegraphics[scale=0.8]{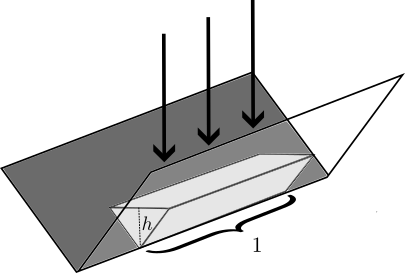}
		\caption{The 'book' cone }
		\label{fig:bookcone}
	\end{figure}
	Now suppose you let $E_n \subset \C$ be a prism as shown in the figure with height $h=\frac{1}{n}$. Then the perimeter inside is given as the sum of the area of the two triangles and the upper rectangle, thus
	\[
	P(E_n:\C) = 2 \cdot \Bigl( \frac{1}{2} \cdot h \cdot 2h \Bigr) + 2h \cdot 1 = 2h^2+2h = \frac{2}{n^2} + \frac{2}{n},
	\]
	and the perimeter on the boundary of $\C$ is then given by the sum of the two tilted rectangles
	\[
	\Enumerator = 2(\sqrt{2}h \cdot 1) = 2\sqrt{2}h = \frac{2\sqrt{2}}{n}.
	\]
	Consequently, the boundary will be
	\[
	\beta(E_n) = \frac{\frac{2\sqrt{2}}{n}}{\frac{2}{n^2} + \frac{2}{n}} 
	= \frac{2\sqrt{2}}{\frac{2}{n} + 2},
	\]
	and thus it follows that $\beta(E_n) \rightarrow \sqrt{2}$ as $\ninfty$. It is also clear that \eqref{eq:ExistTh_SupCond} and \eqref{eq:ExistTh_BallCond} are again satisfied, thus by \ref{ExistTh} a limit $E \in \R^n$ must exist. However since $|E_n| = h^2 = \frac{1}{n^2}$ we  have $|E|=0$, which is not admissible. 
	It will however turn out in Section \ref{sec:solution} that for this cone $\alpha(\C) = \sqrt{2}$ indeed holds and thus that $(E_n)$ indeed is a maximizing sequence, but that there simply does not exist a maximizer for this case.

	\subsection{Solution for bases with an inner described sphere} \label{sec:solution}
	
	In this section, we will provide a proof of the value of $\alpha(\C)$ together with its unique maximizers for cones where we have $G(f)= R \cdot \mathbb{S}^{n-1}$, that is, $f \equiv R$ for some constant $R>0$. Afterwards, we can generalize it to cones where the base $G(f)$ has an inscribed sphere.
	
	\begin{proposition} \label{CircleProof}
		Let $\C \subset \R^n$ be a cone such that the base is of the form $G(f) = R \cdot \mathbb{S}^{n-2}$ for some $R>0$, thus $\C = \bigl\{ (t \cdot B^{n-1}_R(0),t) \: : \: t \geq 0 \bigr\}$. Then, we have
		\[
		\alpha(\C)=\frac{\sqrt{1+R^2}}{R},
		\]
		Moreover, the sets of finite perimeter $E \subset \C$ such that $\beta(E)=\alpha(\C)$ are precisely the sets for which there exists an $t>0$ so that $E = H^-_t \cap \C$. 
		Here we use $H^-_t$ to denote the half-space $\{ x \in \R^n : x_n<t\}$.
	\end{proposition}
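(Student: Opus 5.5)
The plan is a calibration argument with a constant vector field, in the spirit of the proof of Theorem \ref{thm:Giusti}. The outward unit normal to $\partial\C$ at a point $(x',x_n)$ with $|x'| = R x_n$ is
\[
\nu_\C(x) = \frac{1}{\sqrt{1+R^2}}\Bigl(\frac{x'}{|x'|}, -R\Bigr).
\]
Hence the constant field $T \equiv -e_n$ satisfies
\[
T\cdot\nu_\C = \frac{R}{\sqrt{1+R^2}} \quad \text{on all of } \partial\C.
\]
For $E \subseteq \C$ of finite perimeter and bounded (the general case follows by truncation, or by approximating with $E\cap\{x_n<s\}$ and letting $s\to\infty$), I apply the generalized Gauss--Green formula of Theorem \ref{thm:Giorgo}, suitably localized with a cutoff. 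Since $\divv T=0$, this gives
\[
0=\int_{\partial^\ast E\cap \C} T\cdot\nu_E \dd\Hd^{n-1}+\int_{\partial^\ast E\cap\partial\C} T\cdot\nu_E\dd\Hd^{n-1}.
\]
On $\partial^\ast E\cap\partial\C$ we have $\nu_E=\nu_\C$ for $\Hd^{n-1}$-a.e. point. Moreover $\Hd^{n-1}\llcorner(\partial^\ast E\cap\partial\C)$ coincides with $\Tr(\ind_E)\,\Hd^{n-1}\llcorner\partial\C$, because the trace of an indicator is itself an indicator a.e. I will take this standard identification as given, as the paper does in Theorem \ref{thm:Giusti}. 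Combining these facts yields
\[
\frac{R}{\sqrt{1+R^2}}\int_{\partial\C}\Tr(\ind_E)\dd\Hd^{n-1} = -\int_{\partial^\ast E\cap\C} T\cdot\nu_E\dd\Hd^{n-1}\le P(E;\C).
\]
Therefore $\beta(E)\le \sqrt{1+R^2}/R$ for every admissible $E$.

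For the lower bound and attainment, I take $E=H^-_t\cap\C$. Its relative boundary in $\C$ is the flat disc $\{x_n=t,\ |x'|<Rt\}$, with area $\omega_{n-1}(Rt)^{n-1}$ and normal $e_n$, so the inequality above is an equality. Alternatively one can compute directly that the lateral area equals $\sqrt{1+R^2}/R$ times the disc area. Either way $\beta(E)=\sqrt{1+R^2}/R$, which gives the value of $\alpha(\C)$.

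For the characterization of maximizers, suppose $\beta(E)=\alpha(\C)$. Then equality holds in $-T\cdot\nu_E\le 1$, so $\nu_E=e_n$ for $\Hd^{n-1}$-a.e. point of $\partial^\ast E\cap\C$. The main obstacle is turning this into "$E$ is a half-space intersected with $\C$": Proposition \ref{halfspace} is stated for sets in $\R^n$ whose normal is constant on the whole reduced boundary, whereas here the normal is only controlled inside $\C$. I would first try to rerun the mollification argument of Proposition \ref{halfspace} locally in $\C$. For test fields supported in $\C$, it shows that on every ball $B\subset\C$ one has $\partial_{\nu'}u_\varepsilon=0$ for every $\nu'\perp e_n$ and $\partial_{e_n}u_\varepsilon\le0$, for $\varepsilon$ small. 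Thus, locally in $\C$, $E$ agrees with a set of the form $\{x_n<a\}$. Since $\C$ is connected and each horizontal slice $\C\cap\{x_n=s\}$ is a connected disc, these local descriptions patch together: $\ind_E(x)$ depends only on $x_n$ and is nonincreasing in $x_n$. Hence $E=\C\cap\{x_n<t\}$ up to null sets for some $t\in[0,\infty]$. The case $t=0$ is excluded by $|E|>0$, and $t=\infty$ by finiteness of the perimeter (or of $|E|$), so $t\in(0,\infty)$.
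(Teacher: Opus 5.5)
Your proposal is correct and follows the paper's argument: a constant vertical calibration field, the computation $e_n\cdot\nu_\C\equiv -R/\sqrt{1+R^2}$ on $\partial\C$, Gauss--Green combined with $|T\cdot\nu_E|\le 1$, and the equality case forcing $\nu_E=e_n$ on $\partial^\ast E\cap\C$. Your choice $T=-e_n$ instead of $e_n$ is only a sign convention. You also go beyond the paper in two places, and both are worthwhile: you check explicitly that $H^-_t\cap\C$ attains the bound, which the paper leaves implicit. You also rerun the mollification proof of Proposition \ref{halfspace} inside $\C$ instead of quoting it as the paper does. That is genuinely needed, because $\nu_E=\nu_\C$ is not constant on $\partial^\ast E\cap\partial\C$, so the proposition's hypothesis fails for $E$ itself.
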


	\begin{proof} 
		\textit{Step 1:} Define a constant vector field $T \in \contvf$ by 
		$T \equiv (0,...,0,1)$. Then as $T$ is constant it has zero derivative in any direction so we have
		\begin{equation}
			\label{div0}
			\int_E \textnormal{div }T = 0.
		\end{equation}
		On the other hand, by Theorem \ref{thm:Giorgo}, we also have
		\begin{equation}\label{div1}
			\int_E \textnormal{div }T 
			= \int_{\partial^\ast E} T\cdot \nu_E \dd\Hd^{n-1} 
			= \int_{\partial^\ast E \cap \partial \C} T\cdot \nu_\C \dd\Hd^{n-1} + \int_{\partial^\ast E \cap \C} T\cdot \nu_E \dd\Hd^{n-1}.
		\end{equation}
		Indeed $\nu_E = \nu_\C$ on $\partial^\ast E \cap \partial \C$, as by \cite[Remark 12.4]{maggi2012sets} $\mu_E = \nu_C$ must hold there, thus their measure-theoretic normals must coincide $\Hd^{n-1}$-a.e..\\
		Putting \eqref{div0} and \eqref{div1} together, we get
		\begin{equation} \label{eq:DIV0}
			-\int_{\partial^\ast E \cap \partial \C} T\cdot \nu_\C \dd\Hd^{n-1} = \int_{\partial^\ast E \cap \C} T\cdot \nu_E \dd\Hd^{n-1}.
		\end{equation}
		\textit{Step 2:} We claim that 
		\begin{equation} \label{eq:TdotNuC}
			T \cdot \nu_C \equiv \frac{-R}{\sqrt{1+R^2}}.
		\end{equation}
		Indeed, every point $x \in \partial C$ can be written as 
		\[
		x = \bigl(y, \frac{1}{R}|y| \bigr),
		\]
		where $y = (y_1, ..., y_{n-1}) \in \R^{n-1}$ and for any fixed height $x_n = t$ all $y$ satisfy $|y|=tR$. Since at any point on a sphere the normal vector lies on the line through the point and the origin, it follows that 
		\[
		\nu_\C(x)=\nu_\C \biggl( \Bigl(   y, \frac{1}{R}|y| \Bigr)  \biggr) =\frac{(y,a)}{|(y,a)|}
		\]
		for such an $a$ that it is also orthogonal in the direction of $x_n$. As $\partial \C$ consists of straight lines starting at the origin, the vector $\vec{x}$ is itself tangent to $\C$ at $x$. We thus want $\nu_\C(x) \cdot x = 0$, which leads us to 
		\[
		0 = (y,a)\cdot \Bigl(   y, \frac{1}{R}|y| \Bigr) 
		= y \cdot y + \frac{a}{R}|y|
		= |y|^2 + \frac{a}{R}|y|
		\]
		As such we have $a = -R|y|$. It follows that
		\[
		|(y,a)|=|(y,-R|y|)=\sqrt{|y|^2 + R^2|y|^2}=|y|\sqrt{1+R^2}.
		\]
		Thus, the $n$-th component of $\nu_\C(x)$ is given by
		\[
		\frac{a}{|(y,a)|}=\frac{-R|y|}{|y|\sqrt{1+R^2}} = \frac{-R}{\sqrt{1+R^2}}.
		\]
		This proves that 
		\[
		T \cdot \nu_\C \equiv (0, \cdots, 0, 1) \cdot \nu_\C \equiv \frac{-R}{\sqrt{1+R^2}} 
		\]
		showing that \eqref{eq:TdotNuC} indeed holds. \\
		
		\textit{Step 3:} Substituting \eqref{eq:TdotNuC} into \eqref{eq:DIV0}, we get
		\begin{equation}
			\label{div2}
			\frac{R}{\sqrt{1+R^2}} \int_{\partial \C} \Tr(\ind_E) \dd\Hd^{n-1} = \int_{\partial^\ast E \cap \C} T\cdot \nu_E \dd\Hd^{n-1}.
		\end{equation}
		As we have $|T \cdot \nu_E| \leq 1$ by Cauchy-Schwarz, it follows that 
		\begin{equation}
			\label{div3}
			\int_{\partial^\ast E \cap \C} T\cdot \nu_E \dd\Hd^{n-1} \leq \int_{\partial^\ast E \cap \C} 1 \: \dd\Hd^{n-1} = P(E;\C).
		\end{equation}
		By \eqref{div2} and \eqref{div3} it follows that
		\begin{equation}
			\frac{1}{P(E;\C)}\int_{\partial \C} \Tr(\ind_E) \dd\Hd^{n-1}  \leq \frac{\sqrt{1+R^2}}{R}.
		\end{equation}
		Consequently, we have 
		\[ 
		\alpha(\C) \leq \frac{\sqrt{1+R^2}}{R}.
		\]
		Note that equality holds if there exists a set $E \subset \C$ such that equality holds in \eqref{div3}. Such a maximizing set must satisfy $T \cdot \nu_E = 1$ for $\Hd^{n-1}$-a.e. inside the cone $\C$, or equivalently, $\nu_E(x)=(0, ..., 0, 1)$ for $\Hd^{n-1}$-a.e. $x \in \partial E \cap \C$. 
		By Proposition \ref{halfspace} it follows then that 
		\begin{align*}
			E \cap \C &= \{ z \in \R^n : z \cdot (0,...,0,1) < t \} \cap \C = \{ z \in \R^n : z_n < t \} \cap \C
		\end{align*}
		for some $t \in \R$, and as $E \subset \C$ we must have $t>0$. 
		In conclusion, 
		\[
		\alpha(\C)=\frac{\sqrt{1+R^2}}{R}
		\]
		and the maximizing sets are exactly those sets $E \subset \C$ such that there exists an $t>0$ so that $E = H^-_t \cap \C$.
	\end{proof}
	
	For proving the previous Proposition is was essential that $(0,...,0,1)\cdot \nu_{\partial \C}$ was constant on $\partial \C$. One may thus wonder if we can generalize the statement to any cone for which this holds. For example, if $\C \subset \R^3$ and the base of the cone is a square, then the inner product $(0,...,0,1)\cdot \nu_{\partial \C}$ is also constant and equal to the product for the cone generated by the inner circle of the square. 
	
	\begin{theorem} \label{thm:Shapeswithinnercircle}
		Let $\C$ be a cone with a base $G(f) \subset \R^{n-1}$, such that there exists a sphere 
		\[
		S_R(\bar{x}) \coloneqq \{ y \in \R^{n-1} : \|x-y\| = R \} \subset \R^{n-1},
		\]
		such that any hyperplane $T_y \subset \R^{n-2}$ tangent to $G(f)$ at $y \in G(f)$ is also tangent to $S_R(x)$. Then we have 
		\[
		\alpha(\C)=\frac{\sqrt{1+R^2}}{R}.
		\]
		Moreover, a set $E\subset\C$ is a solution to the maximization problem defining $q_\pom(0)$ if and only if it is the intersection of the half-space $\{x\in\R^n : x\cdot \bar{x} \leq 0\}$ with $\C$.
	\end{theorem}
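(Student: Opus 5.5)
The plan is to rerun the calibration argument of Proposition \ref{CircleProof}, using a constant unit vector field $T\equiv v$ chosen so that $T\cdot\nu_\C$ is constant on $\partial\C$. First I fix coordinates. Since $\C$ is invariant under dilations, I view the base as the slice $\{x_n=1\}$ and choose the axis so that the centre of the inscribed sphere lies on it. That is, I apply a rotation taking the ray through $(\bar x,1)$ to the $x_n$-axis and then set $v=e_n$. The statement's phrase ``half-space $\{x\cdot\bar x\le t\}$'' should be read with this normalisation, so the candidate maximisers are the slabs $\C\cap\{x_n<t\}$.

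\emph{Step 1 (normal computation).} A regular point of $\partial\C$ has the form $s(y,1)$ with $s>0$ and $y\in\partial G(f)$, where $\partial G(f)$ has outer unit normal $m\in\R^{n-1}$. The tangent space of $\partial\C$ there is spanned by $(y,1)$ and the tangent vectors of $\partial G(f)$ at $y$. Hence
\[
\nu_\C=\frac{(m,-m\cdot y)}{\sqrt{1+(m\cdot y)^2}} .
\]
The hypothesis says the tangent hyperplane $T_y$ is also tangent to the sphere $S_R$. With the centre placed at the origin of the slice, this means $m\cdot y=R$ at $\Hd^{n-2}$-a.e.\ point of $\partial G(f)$. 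Therefore
\[
e_n\cdot\nu_\C\equiv-\frac{R}{\sqrt{1+R^2}}
\]
$\Hd^{n-1}$-a.e.\ on $\partial\C$. This is exactly \eqref{eq:TdotNuC}, with no roundness of the base needed. Here I use that $f$ is continuous, so $\partial G(f)$ is Lipschitz and has a normal a.e.; this should be justified by a short remark.

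\emph{Step 2 (upper bound).} Steps 1 and 3 of the proof of Proposition \ref{CircleProof} now carry over verbatim. The divergence of the constant field vanishes. Theorem \ref{thm:Giorgo} splits the boundary integral over $\partial^\ast E\cap\partial\C$ and $\partial^\ast E\cap\C$, and on the first piece $\nu_E=\nu_\C$. Cauchy--Schwarz on the second piece then gives
\[
\beta(E)\le\frac{\sqrt{1+R^2}}{R}
\]
for every admissible $E$.

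\emph{Step 3 (attainment and characterisation).} For $E_t=\C\cap\{x_n<t\}$, the set $E_t$ is bounded because $\C$ is a genuine cone whose base is contained in a bounded region; I should check boundedness of $G(f)$ or treat $f=\infty$ separately. The reduced boundary of $E_t$ inside $\C$ is the flat cap, where $\nu_{E_t}=e_n$, so equality holds in \eqref{div3} and $\beta(E_t)=\sqrt{1+R^2}/R$. Conversely, equality forces $\nu_E=e_n$ for $\Hd^{n-1}$-a.e.\ point of $\partial^\ast E\cap\C$. Proposition \ref{halfspace} is stated for sets in $\R^n$, so I cannot apply it directly. I would instead apply its mollification argument locally inside $\C$ to obtain $\partial_{\nu'}u_\varepsilon=0$ and $\partial_{e_n}u_\varepsilon\le 0$ on $\C$. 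Because every slice $\C\cap\{x_n=s\}$ is connected (indeed star-shaped about the axis), this shows that $E$ coincides with $\C\cap\{x_n<t\}$ for some $t>0$.

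\emph{Main obstacle.} I expect the real difficulty to be the geometric normalisation in Step 1. If the sphere's centre is not on the axis $\R e_n$ of the cone, the same computation gives $m\cdot y=R+m\cdot\bar x$. In that case no constant field obviously makes $T\cdot\nu_\C$ constant. I would first try $T=(w',w_n)$ and solve $w'\cdot m-w_n(R+m\cdot\bar x)=c\sqrt{1+(R+m\cdot\bar x)^2}$. If this fails, the theorem has to be read with the centre on the axis, i.e.\ after the rotation that makes the cone right, with $R$ measured in the slice orthogonal to $(\bar x,1)$. I would state that convention explicitly before running Steps 1--3. The local version of Proposition \ref{halfspace} in Step 3 is a secondary technical point.
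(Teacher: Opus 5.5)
\textbf{Overall.} Your Steps 1--3 are the paper's argument: calibration by the constant field $e_n$, constancy of $e_n\cdot\nu_\C=-R/\sqrt{1+R^2}$ from the tangency hypothesis, the divergence theorem plus Cauchy--Schwarz, and half-space rigidity for the equality case. They are correct when the hypothesis holds with $\bar x=0$.

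\textbf{The normalization step fails.} What fails is your opening normalization. It is the same unjustified ``without loss of generality $\bar x=0$, as we can rotate the cone'' that the paper uses, and the statement is in fact false as written. For $n=2$, take the right angle bounded by the rays through $(-\tfrac12,1)$ and $(2,1)$. It satisfies the hypothesis with $\bar x=\tfrac34$ and $R=\tfrac54$, yet $\alpha=\sqrt2\neq\sqrt{41}/5$. Rotating the ray through $(\tfrac34,1)$ onto the axis only yields the mirror-image base $(-2,\tfrac12)$. The base is centred only when the axis is the bisector, direction $(1,3)$, and there the slice radius is $1$.

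Your ansatz $T=(w',w_n)$ does not rescue the formula either. For $n=2$ it succeeds with $T$ the unit bisector, but it gives $T\cdot\nu_\C\equiv-\sin\frac{\pi}{4}$ rather than $-5/\sqrt{41}$. For $n\ge3$ with $G(f)=B_R(\bar x)$ and $\bar x\neq0$, the cone is an oblique circular cone. No rotation makes it right, and no constant $T\neq0$ makes $T\cdot\nu_\C$ constant. So your fallback is the right conclusion, but it must be stated as an assumption rather than obtained by a rotation. The hypothesis has to hold with $\bar x=0$, i.e.\ every tangent hyperplane of $\C$ is tangent to the right circular cone with axis $e_n$ and half-angle $\arctan R$.

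\textbf{Your other caveats.} These are well founded.
Proposition~\ref{halfspace} requires $\nu_E=e_n$ on all of $\partial^\ast E$, whereas $\nu_E=\nu_\C$ on $\partial^\ast E\cap\partial\C$. Your localized mollification argument, combined with connectedness of the slices $sG(f)$, therefore repairs a step the paper glosses over.

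Unbounded bases are a real exception. For the book cone (strip base, $R=1$) the sets $E_t$ have infinite perimeter, so no maximizer exists and the ``if and only if'' fails. The lower bound on $\alpha$ then needs truncations $E_t\cap\{|x'|<L\}$ with $L\to\infty$, as in the paper's prism example.

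Finally, contrary to your remark, continuity of $f$ does not make $\partial G(f)$ Lipschitz. That regularity must be assumed, as the paper tacitly does.
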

	
	\begin{proof}
		Without loss of generality, we assume that indeed $\bar{x}=0$ as we can rotate the cone, and write $\C_0$ for the cone generated by the inner circle $S_R(0)$. 
		
		\textit{Step 1:}
		First, we will show that the conditions on $G(f)$ are sufficient such that 
		\[
		(0,...,0,1) \cdot \nu_\C \equiv \frac{-R}{\sqrt{1+R^2}}.
		\]
		Let $y \in G(f)$ and let $T_y$ be tangent to $G(f)$ at $y$. Since we have $\partial \C = \{(t\,G(f),t) : t>0\}$, we define $\hat{T}_y = \{ (t \, T_y, t) : t>0 \} \subset \R^n$. As this construction linearly expands $T_y$ in the $n$-th axis it follows that $\hat{T}_y$ is a $(n-1)$-dimensional hyperplane and  for every $t>0$, $(t\, T_y, t)$ is tangent to $(t \, G(f), t)$, thus we may conclude that $\hat{T}_y$ is tangent to $\C$ on the line $\{(ty,t) \: : \: t>0 \} \in \partial \C$. 
		However, as $T_y$ is also tangent to $S_R(0)$, it follows is in the same way that for all $t>0$ $(t \, T_y, t)$ is tangent to $(t \, S_R(0), t)$ and thus that $\hat{T}$ is tangent to $\C_0$. This last statement implies that for all $t>0$ and $y \in G(f)$
		\begin{equation} \label{eq:TangCircleProduct}
			(0,...,0,1) \cdot \nu_\C((ty,t)) = (0,...,0,1) \cdot \nu_{\hat{T}_y} = (0,...,0,1) \cdot \nu_{\C_0}(y_0) = \frac{-R}{\sqrt{1+R^2}},
		\end{equation}
		where $y_0 \in \hat{T}_y \cap \C_0$. Since $t > 0$ and $y \in \C_0$ are arbitrary and any point on $\partial \C$ can be written in this form $(ty,t)$, it follows that
		\[
		(0,...,0,1) \cdot \nu_\C = \frac{-R}{\sqrt{1+R^2}}
		\]
		holds in general. \vspace{5mm} \\
		
		\textit{Step 2:} 
		Let $T \in \contvf$ be $T \equiv (0, ...,0,1)$, then thanks to Step 1 and using similar computations as in the proof of Theorem \ref{CircleProof} we get,
		\[
		\int_{\partial^\ast E \cap \C} T \cdot \nu_E \dd\Hd^{n-1} =
		\frac{-R}{\sqrt{1+R^2}} \int_{\partial \C} \Tr(\ind_E) \dd\Hd^{n-1}. 
		\]
		By \eqref{div0} and \eqref{div1} we can again conclude 
		\[
		\frac{R}{\sqrt{1+R^2}} \int_{\partial \C} \Tr(\ind_E) \dd\Hd^{n-1} = \int_{\partial^\ast E \cap \C} T\cdot \nu_E \dd\Hd^{n-1}.
		\]
		Thus by \eqref{div3} we have
		\[
		\alpha(\C) = \frac{1}{P(E;\C)}\int_{\partial \C} \Tr(\ind_E) \dd\Hd^{n-1}  \leq \frac{\sqrt{1+R^2}}{R}.
		\]
		
		Finally, arguing as in the proof of Theorem \ref{CircleProof} it follows that the maximizing sets $E$ such equality holds are exactly those sets $E \subset \C$ such that there exists an $t > 0$ so that $E = H^-_t \cap \C$.
	\end{proof} 
	
	\begin{figure}[H]
		\centering
		\includegraphics[scale=0.7]{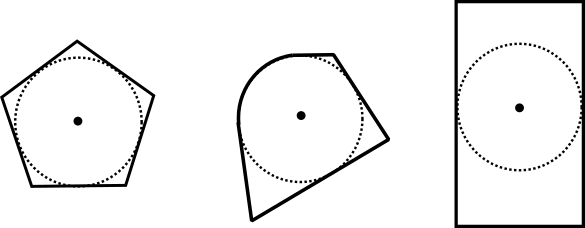}
		\caption{Even though the first two shapes for $G(f)$ satisfy the conditions of Theorem \ref{thm:Shapeswithinnercircle}, the third does not.}
		\label{fig:InnerCircle}
	\end{figure}
	
	Figure \ref{fig:InnerCircle} gives some intuition about the class of shapes Theorem \ref{thm:Shapeswithinnercircle} allows in the case of 3-dimensional cones. Indeed, every convex regular polyhedron has an inner described circle, but also more irregular shapes as the second one are possible. On the other hand, non-square rectangles like the third figure do not have an inscribed circle and therefore the theorem does not apply.
	
	\begin{remark}
		Note that in the case where the cone $\C \subset \R^2$ is 2-dimensional, $G(f)$ will exist of only two points, as the domain of $f$ is then $\mathbb{S}^0=\{1,-1\}$. As such, simply taking 
		\[
		x=\frac{1}{2}\bigl(f(1)+f(-1)\bigr) \textnormal{  and  } R=\frac{1}{2}\bigl|f(1)-f(-1)\bigr|
		\]
		it follows that Theorem \ref{thm:Shapeswithinnercircle} applies to any choice of $f$. Consequently, in 2 dimensions the minimizer always has a top surface orthogonal to the bisector.
	\end{remark}
	
	\begin{remark}
		Giusti also mentioned that the solution for 2 dimensions would be as provided by Theorem~\ref{thm:Shapeswithinnercircle} and the last Remark on page 15 of \cite{giusti1976boundary}, but did not provide a proof.
	\end{remark}
	
	\begin{remark}
		One can now indeed see that the sequence of sets $(E_n)$ constructed for the book-cone example in Figure \ref{fig:bookcone} was actually maximizing. Indeed, in that case, the shape $G(f)$ existed of 2 parallel lines at a distance 2 from each other, thus admitting a inner circle of radius $R=1$, which by Theorem \ref{thm:Shapeswithinnercircle} implies that the optimal ratio must be $\sqrt{2}$.
	\end{remark}
	The strategy of the previous proof can also be adapted to get a bound for more general classes of cones.
	
	\begin{corollary} \label{cor:BoundEstimate}
		Let a cone $\C$ have a base $G(f) \subset \R^{n-1}$, such that there exists a sphere 
		\[
		S_R(\bar{x}) \coloneqq \{ y \in \R^{n-1} : \|\bar{x}-y\| = R \},
		\]
		such that any $(n-2)$-dimensional plane $T_y \subset \R^{n-1}$ tangent to $G(f)$ at $y \in G(f)$ is either tangent to $S_R(x)$ or $T_y \cap S_R(x) = \emptyset$, then
		\[
		\alpha(\C) \leq \frac{\sqrt{1+R^2}}{R}.
		\]
	\end{corollary}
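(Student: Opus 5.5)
The plan is to repeat the calibration argument of Proposition~\ref{CircleProof} and Theorem~\ref{thm:Shapeswithinnercircle} with the constant field $T\equiv(0,\dots,0,1)$. The one change is that the identity $T\cdot\nu_\C\equiv -R/\sqrt{1+R^2}$ is replaced by the one-sided inequality
\[
T\cdot\nu_\C\le -\frac{R}{\sqrt{1+R^2}}\qquad \Hd^{n-1}\text{-a.e. on }\partial\C .
\]
As in the proof of Theorem~\ref{thm:Shapeswithinnercircle}, I take $\bar x=0$, following the paper's reduction. (Strictly speaking, moving $\bar x$ to $0$ in the base corresponds to a linear change of the cone rather than a rotation. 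If one wants to avoid this, the computation below can be redone with $c=\bar x\cdot m+d$ in place of $d$, where $d\ge R$ is now the distance from $\bar x$ to $T_y$.)

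\textit{Step 1 (the normal).} Fix $y\in\partial G(f)$ at which the tangent plane $T_y$ exists. This holds for $\Hd^{n-2}$-a.e.\ $y$, and hence, by the cone structure, for $\Hd^{n-1}$-a.e.\ point $(ty,t)$ of $\partial\C$. Write
\[
T_y=\{z\in\R^{n-1}: z\cdot m=d\},
\]
with $m$ the outward unit normal to $G(f)$ at $y$ and $d$ the signed distance from the centre $0$ to $T_y$. The hypothesis says that $T_y$ is either tangent to $S_R(0)$ or misses it. Since the centre lies on the inner side of $T_y$, this gives $d\ge R>0$. As in Step~1 of Theorem~\ref{thm:Shapeswithinnercircle}, the tangent hyperplane to $\partial\C$ along the ray $\{(ty,t):t>0\}$ is
\[
\hat T_y=\{(x',x_n): x'\cdot m=d\,x_n\}.
\]
Its outward normal is $(m,-d)/\sqrt{1+d^2}$. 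Hence
\[
(0,\dots,0,1)\cdot\nu_\C=-\frac{d}{\sqrt{1+d^2}}\le-\frac{R}{\sqrt{1+R^2}},
\]
because $s\mapsto s/\sqrt{1+s^2}$ is increasing on $(0,\infty)$. As a consistency check, equality $d=R$ recovers formula \eqref{eq:TangCircleProduct}.

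\textit{Step 2 (divergence identity).} Let $E\subset\C$ have finite perimeter and $|E|>0$. Arguing exactly as in \eqref{div0}--\eqref{eq:DIV0}, using De Giorgi's Theorem~\ref{thm:Giorgo} and $\nu_E=\nu_\C$ on $\partial^\ast E\cap\partial\C$, we get
\[
-\int_{\partial\C}\Tr(\ind_E)\,T\cdot\nu_\C\dd\Hd^{n-1}=\int_{\partial^\ast E\cap\C}T\cdot\nu_E\dd\Hd^{n-1}\le P(E;\C).
\]
The inequality is \eqref{div3}. By Step~1 and $\Tr(\ind_E)\ge0$, the left-hand side is at least $\frac{R}{\sqrt{1+R^2}}\int_{\partial\C}\Tr(\ind_E)\dd\Hd^{n-1}$. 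Dividing by $P(E;\C)$ gives $\beta(E)\le\sqrt{1+R^2}/R$, and taking the supremum over $E$ gives the bound on $\alpha(\C)$.

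The main obstacle is Step~1. One must justify that the tangent-plane description of $\nu_\C$ holds $\Hd^{n-1}$-a.e.\ on $\partial\C$; here the paper only assumes $f$ continuous, and I would add Lipschitz regularity of $G(f)$ so that tangent planes exist a.e. One must also pin down the sign of $d$, i.e.\ that the centre lies on the interior side of every tangent plane. For the latter I would argue that the disjointness alternative, together with the sphere sitting inside $G(f)$, forces $S_R$ into the half-space $\{z\cdot m<d\}$. Unlike the theorem, no characterization of maximizers is claimed here, so Proposition~\ref{halfspace} is not needed.
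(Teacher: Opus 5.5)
For $\bar x=0$ your argument is exactly the paper's. The paper also calibrates with $T\equiv e_n$ and bounds $T\cdot\nu_\C$ by observing that $\hat T_y$ is tangent to a round cone $\C_\rho$ with $\rho\ge R$. Your $d$ is precisely that $\rho$, and your explicit normal $(m,-d)/\sqrt{1+d^2}$ just makes the appeal to \eqref{eq:TangCircleProduct} concrete. On the sign of $d$, nothing extra is needed for $n\ge 3$. Since $G(f)$ is star-shaped about $0$, you have $d=y\cdot m\ge 0$, and a hyperplane at distance less than $R$ from the centre would meet the connected sphere $S_R(0)$; hence ``tangent or disjoint'' gives $d\ge R$. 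For $n=2$, disjointness from the two-point set $S_R(0)$ does not force $d\ge R$, so there you must assume $[-R,R]\subset\overline{G(f)}$ explicitly.

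The real problem is $\bar x\neq 0$, which the paper also disposes of by ``re-orienting'' the cone. You are right to distrust this. A rotation taking the ray through $(\bar x,1)$ to the axis does not send the slice sphere to a slice sphere, and the shear $(x',x_n)\mapsto(x'-x_n\bar x,x_n)$ does not preserve $\beta$. But your remedy fails. With $c=\bar x\cdot m+d$ you get $e_n\cdot\nu_\C=-c/\sqrt{1+c^2}$, and at a tangency point where $\bar x\cdot m<0$ one has $c=R+\bar x\cdot m<R$, so the bound of Step~1 is violated there.

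No argument can repair this, because the statement is false for $\bar x\neq 0$. Take $n=2$ and $\C=\{x_2>0,\ -x_2/\sqrt3<x_1<\sqrt3\,x_2\}$, a right angle whose bisector is not the $x_2$-axis. Its base $G(f)=(-1/\sqrt3,\sqrt3)$ has the inscribed sphere $S_R(\bar x)=\{-1/\sqrt3,\sqrt3\}$ with $\bar x=1/\sqrt3$ and $R=2/\sqrt3$. The corollary would therefore give $\alpha(\C)\le\sqrt{1+R^2}/R=\sqrt7/2$. Yet the isosceles triangle with legs of length $\ell$ on the two sides of $\C$ has trace $2\ell$ and interior perimeter $\sqrt2\,\ell$, so $\alpha(\C)\ge\sqrt2>\sqrt7/2$. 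The wedge $\{x_3>0,\ -x_3/\sqrt3<x_1<\sqrt3\,x_3\}\subset\R^3$, with thin prisms as in the book-cone example, gives the same contradiction for $n=3$.

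So the corollary, and likewise Theorem~\ref{thm:Shapeswithinnercircle} and the two-dimensional remark after it, must be stated with the sphere centred on the axis, $\bar x=0$. Under that hypothesis your proof, like the paper's, is complete. For off-centre spheres, what your method honestly yields is $\alpha(\C)\le 1/\inf_{\partial\C}(-v\cdot\nu_\C)$ for a suitable constant unit vector $v$, not a bound in terms of $R$. In the example $v$ is the bisector, which gives the sharp value $\sqrt2$.
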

	
	\begin{proof}
		The proof of this Corollary is a result of a slight alteration of the proof of Theorem \ref{thm:Shapeswithinnercircle}. Just as before, first re-orientate the cone such that $x$ corresponds to $(0,\cdots,0,1)$ in $\C = \{ (tG(f),t), t>0 \}$. 
		We will again use the constant vector field $T \equiv (0,\cdots,0,1)$. Denote $\C_\rho$ for the cone $\C_\rho = \{(t \rho \, \mathbb{S}^{n-2},t) \: : \: t>0 \} $.
		By equation \eqref{eq:TangCircleProduct} we know that the inner product $T \cdot \nu_\C(y_0)$ for some point $y_0 \in \partial \C$ is determined by which cone $\C_\rho$ the tangent plane $\hat{T}_{y_0}$ is tangent to. \\
		Because of the condition on the circle $S_R(0)$, for all $y_0 \in \partial \C$, the cone $\C_\rho$ where $\hat{T}_{y_0}$ is tangent to will have radius $\rho \geq R$. For $y_0$ we then have
		\[
		T \cdot \nu_\C(y_0) = \frac{-\rho}{\sqrt{1+\rho^2}} \leq \frac{-R}{\sqrt{1+R^2}}.
		\]
		As such, we get
		\[
		\int_{\partial^\ast E \cap \C} T \cdot \nu_E \dd\Hd^{n-1} \leq
		\frac{-R}{\sqrt{1+R^2}} \int_{\partial \C} \Tr(\ind_E) \dd\Hd^{n-1}.
		\]
		By \eqref{div0} and \eqref{div1}, we have
		\[
		\frac{R}{\sqrt{1+R^2}} \int_{\partial \C} \Tr(\ind_E) \dd\Hd^{n-1} 
		\leq
		-\int_{\partial^\ast E \cap \partial \C} T \cdot \nu_C \dd\Hd^{n-1}
		=
		\int_{\partial^\ast E \cap \C} T\cdot \nu_E \dd\Hd^{n-1}.
		\]
		Now we apply \eqref{div3} as before and find 
		\[
		\frac{1}{P(E;\C)}\int_{\partial \C} \Tr(\ind_E) \dd\Hd^{n-1} \leq \frac{\sqrt{1+R^2}}{R}.
		\]
		In conclusion,
		\[
		\alpha(\C) \leq \frac{\sqrt{1+R^2}}{R}.
		\]
	\end{proof}
	As we have
	\[\frac{\sqrt{1+R^2}}{R} = \sqrt{\frac{1}{R^2}+1},\]
	this is a decreasing function so that one achieves the sharpest bound by finding the biggest circle $S_R(x)$ that satisfies the conditions in Corollary \ref{cor:BoundEstimate}.
	
	\vspace{5mm}
	
	\subsection{Outlook}
	Although the conditions of Theorem \ref{thm:Shapeswithinnercircle} admit a wide range of shapes, we already saw that in the 3-dimensional case, it fails even when the base is a regular shape like a non-square rectangle, even though it seems intuitive that the half-space solution is still optimal. On the other hand, if $G(f)$ has a very thin spike, one might actually expect the minimizer, if existing, to be found inside such a spike. A major question for further research thus might be to ask if one can relax the conditions of Theorem \ref{thm:Shapeswithinnercircle} to, for example, convex cones or some other condition.


	\section*{Acknowledgments}
	To start off I would like to thank my supervisor Prof. Cristoferi for guiding me through the writing of the thesis, and I would like to thank the Radboud Honours Academy for making it possible to improve my thesis
	by sponsoring a trip to the United States, where I could discuss my research problem with several
	experts. For that matter, I want to thank Prof. Leoni, Prof. Neumayer and Prof. Smit Vega Garcia
	for guiding me during my visit and for having several meetings with me that greatly improved
	my understanding of the subject. I also was very pleased by the discussions I had with the PhD
	candidates of Prof. Leoni, Wesley Caldwell and Francesco Cassandra, during my stay.

	\bibliographystyle{siam}
	\bibliography{references.bib}
	
\end{document}